\documentclass[12pt, a4paper, reqno]{article}
\usepackage{amsmath, amsfonts, amsthm}
\usepackage[utf8]{inputenc}
\usepackage[T1]{fontenc}
\usepackage{bbm}

\usepackage[dvipsnames]{xcolor}
\usepackage[polish, english]{babel}

\usepackage{dsfont}
\usepackage{hyperref}
\usepackage{enumitem}
\usepackage{ulem}
\usepackage{cancel}
\usepackage{todonotes}

\newtheorem{theorem}{Theorem}[section]
\newtheorem{definition}[theorem]{Definition}
\newtheorem{corollary}[theorem]{Corollary}
\newtheorem{lemma}[theorem]{Lemma}
\newtheorem{proposition}[theorem]{Proposition}

\theoremstyle{remark}
\newtheorem{remark}[theorem]{Remark}
\newtheorem{example}{Example}[section]

\def \RR {\mathbb{R}}
\def \Rd {\mathbb{R}^d}

\def \tp {\tilde{p}}

\renewcommand{\d}{{\rm d}}

\title{Invariant and Super-median Functions for Perturbed Heat Kernels}

\author{%
  Tomasz Jakubowski\thanks{Wydział Matematyki, Politechnika Wrocławska,
  Wyb. Wyspiańskiego 27, 50-370 Wrocław, Poland.
  E-mail: \texttt{tomasz.jakubowski@pwr.edu.pl}}
  \and
  Karol Szczypkowski\thanks{Wydział Matematyki, Politechnika Wrocławska,
  Wyb. Wyspiańskiego 27, 50-370 Wrocław, Poland.
  E-mail: \texttt{karol.szczypkowski@pwr.edu.pl}}
}

\date{}

\begin{document}

\maketitle

\begin{abstract}
We study super-median and invariant functions for positive and negative Schr\"odinger perturbations of non-negative kernels. For positive perturbations, combining Dynkin's formula for the original kernel with Duhamel's formula for the perturbation, we derive perturbed Dynkin-type identities and give integrability conditions ensuring invariance. 
When the perturbed kernel is given by the perturbation series, we obtain a sufficient condition for a function to be super-median without additional integrability assumptions.
We also decompose every finite non-negative solution of Duhamel's equation into the perturbation series and a non-negative solution of the corresponding homogeneous equation. 
For negative perturbations, we obtain analogous identities under an explicit finiteness condition. We also construct functions satisfying Dynkin's formula using zero-resolvents and compensated kernels. Examples involving Gaussian, fractional, and Bessel kernels illustrate the invariance criteria and the role of non-uniqueness in Duhamel's equation.
\end{abstract}

\section{Introduction}

A heat kernel (or transition density) provides a description of an evolution (of a particle or a system) at an intermediate level between its pathwise realizations and its averaged representation by the corresponding semigroup.
A common way of obtaining a new evolution model from a given one is by adding a potential term. The analysis of the perturbed evolution becomes particularly challenging when the resulting heat kernel is no longer comparable with the original one.

Before turning to a specific example, let us describe the general idea behind this problem. Let $(P_t)_{t\geq 0}$ be a semigroup with generator $L$, and suppose that a positive function $h$ is super-median, meaning that $P_t h\leq h$. Formally, this corresponds to the super-harmonicity condition $Lh\leq0$, as suggested by Dynkin's formula
\[
P_t h-h=\int_0^t P_sLh\,ds.
\]
The function $h$ then determines the non-negative potential
\[
V=-\frac{Lh}{h},
\]
and hence the perturbed operator $\widetilde L=L+V$, for which $h$ is harmonic, since $\widetilde Lh=0$. It is therefore natural to expect $h$ to be invariant under the semigroup $(\widetilde P_t)_{t\geq0}$ associated with $\widetilde L$, namely that $\widetilde P_t h=h$. Such invariant functions play an important role in estimates of the corresponding heat kernel $\tp$. However, as we shall see, this expected invariance may fail. Our aim is to understand when Dynkin's formula leads to invariance under the corresponding perturbation.

To examine these issues in a specific setting, let $p$ be the heat kernel of the isotropic $\alpha$-stable process in $\RR^d$,
 where $d\in\mathbb N$ and $\alpha\in(0,2)$, that is, the fundamental solution to the equation $\partial_t u=-(-\Delta)^{\alpha/2}u$. The fundamental solution $\tp$ of $\partial_t u=-(-\Delta)^{\alpha/2}u+\kappa|x|^{-\alpha}u$ may be treated as the perturbation of $p$ by a potential $V(x)=\kappa|x|^{-\alpha}$, where $\kappa$ is a constant. Such a perturbation is known to be critical; in particular, the value of $\kappa$ strongly affects the existence and  behaviour of non-negative $\tp$.  
For this example, assume that $d>\alpha$.
Put
\[
\kappa_\beta=\frac{2^\alpha\Gamma\big(\frac{\beta+\alpha}{2}\big)\Gamma\big(\frac{d-\beta}{2}\big)}
{\Gamma\big(\frac{\beta}{2}\big)\Gamma\big(\frac{d-\beta-\alpha}{2}\big)},
\qquad 0<\beta<d-\alpha,
\]
and set $\kappa_0=0$ by continuity.
In~\cite{MR3460023}, the authors show the finiteness of $\tp$ by proving that
\begin{equation}\label{eq:i-5}
\int_{\RR^d}\tp(t,x,y)|y|^{-\delta}\,\d{y}\leq|x|^{-\delta},
\qquad t>0,\ x\in\RR^d,
\end{equation}
where  the value of $\delta\in(0,(d-\alpha)/2]$ is uniquely determined by the relation $\kappa_\delta = \kappa$.
In fact, the authors propose a general method of deriving inequalities such as~\eqref{eq:i-5}, inspired by the equality
\begin{equation}\label{eq:i-6}
|x|^{-\delta}=\int_0^\infty\int_{\RR^d}f(t)p(t,x,y)\,\mu(\d{y})\d{t},
\qquad x\in\RR^d,
\end{equation}
where $f(t)=ct^{(d-\alpha-\delta)/\alpha}$, $c$ is a positive constant depending on the parameters, and $\mu(\d{y})=\delta_0(\d{y})$ is the Dirac measure at the origin. Later, in~\cite{MR3933622}, the result was improved to
\begin{equation}\label{eq:i-1}
\int_{\RR^d}\tp(t,x,y)|y|^{-\delta}\,\d{y}=|x|^{-\delta},
\qquad t>0,\ x\in\RR^d,
\end{equation}
which is an instance of a more general family of equalities (see~\cite[Theorem~3.1]{MR3933622}): for $\beta\in[0,d-\alpha-\delta)$,
\begin{equation}\label{eq:i-2}
\int_{\RR^d}\tp(t,x,y)|y|^{-\beta}\,\d{y}
+(\kappa_\beta-\kappa)\int_0^t\int_{\RR^d}\tp(s,x,y)|y|^{-\beta-\alpha}\,\d{y}\,ds
=|x|^{-\beta}, \qquad t>0,\ x\ne0 .
\end{equation}
We note that~\eqref{eq:i-1} can be fully recovered from~\eqref{eq:i-2} for the entire range $\delta\in (0,(d-\alpha)/2]$.
However, this can be done directly by setting $\beta=\delta$ only for $\delta\in (0,(d-\alpha)/2)$; in the endpoint case $\delta=(d-\alpha)/2$, additional arguments are required. Furthermore, the term involving $(\kappa_\beta-\kappa)$ is in fact a difference of two integrals, one containing $\kappa_\beta$ and the other $\kappa$. The finiteness of these integrals is both non-trivial and crucial. The equalities~\eqref{eq:i-2} are important for deriving sharp two-sided estimates for $\tp$ (see \cite[Theorem~1.1]{MR3933622}): 
\begin{align*}
\tilde{p}(t,x,y) \approx \left(1\land\frac{|x|}{t^{1/\alpha}}\right)^{-\delta}\left(1\land\frac{|y|}{t^{1/\alpha}}\right)^{-\delta}  p(t,x,y), \qquad t>0,\; x,y \in \RR^d,
\end{align*}
where the weight $|x|^{-\delta}$ determines the correction factors.
The use of weights in heat kernel estimates for Hardy-type perturbations is also central to the method of desingularizing weights, developed by Milman and Semenov~\cite{MR1994762, MR2064932, MR2114706} and applied in the fractional setting by Kinzebulatov and Semenov~\cite{kinzebulatov2019}.
In the local case,
the role of positive harmonic functions in such estimates is investigated
in~\cite{MR3684109}.
The proof of~\eqref{eq:i-2} 
starts by establishing that, for $\beta\in[0,d-\alpha)$,
\begin{equation}\label{eq:i-3}
\int_{\RR^d}p(t,x,y)|y|^{-\beta}\,\d{y}
+\kappa_\beta\int_0^t\int_{\RR^d}p(s,x,y)|y|^{-\beta-\alpha}\,\d{y}\,\d{s}
=|x|^{-\beta}, \qquad t>0,\ x\in\RR^d,
\end{equation}
see~\cite[Lemma~3.3]{MR3933622}. This follows from \eqref{eq:i-6} with $\beta$ in place of $\delta$, since \eqref{eq:i-6} is  valid throughout the range $(0,d-\alpha)$. 
In particular, $|x|^{-\beta}$ is super-median for $p$,
and the potential
associated with the weight $|x|^{-\beta}$ is obtained from~\eqref{eq:i-3} as the ratio $\kappa_\beta|x|^{-\beta-\alpha}/|x|^{-\beta}=\kappa_\beta|x|^{-\alpha}$. Finally, $p$ and $\tp$ are related by Duhamel's formula,
\begin{equation}\label{eq:i-4}
\tp(t,x,y)=p(t,x,y)+\int_0^t\int_{\RR^d}\tp(s,x,z)\kappa|z|^{-\alpha}p(t-s,z,y)\,\d{z}\,\d{s}, \qquad t>0,\ x,y\in\RR^d.
\end{equation}

The success of this approach, leading to a refined analysis of $\tp$, is in part due to the fact that it does not rely on any functional-analytic framework, which could limit the flexibility of other methods. Consequently, it was later applied to operators beyond the fractional Laplacian, for which counterparts of~\eqref{eq:i-3}, \eqref{eq:i-2}, and~\eqref{eq:i-1} were 
obtained by adapting the original arguments.
 In many of these cases, including those considered in~\cite{MR4660833,MR4937881} and \cite{MR4746984}, the kernels $p$ and $\tp$ possess scaling properties, which facilitate the analysis. A notable exception is the relativistic fractional Laplacian, studied under critical perturbations in~\cite{MR4705624,MR4645729}. Since this operator is known to lack scaling, its treatment involves additional difficulties and also relies on the results for the fractional Laplacian~\cite{MR3933622}.

The above considerations are closely related to the theory of Hardy inequalities and identities. Fitzsimmons~\cite{MR1786080} and Rao and \v{S}iki\'c~\cite{MR2215185} link such inequalities for Dirichlet forms to positive super-harmonic functions. The construction in~\cite{MR3460023} formulates this connection in terms of symmetric transition densities and their weighted time integrals.
Under additional assumptions, this construction yields Hardy identities, called ground-state representations, which decompose the Dirichlet form into a potential term and a non-negative remainder involving the quotient of a test function by a chosen positive weight. 
Ground-state representations for fractional Schr{\"o}dinger operators and more general non-local forms were studied in~\cite{MR2425175, MR2469027}, \cite{MR3460023, MR4746984}, \cite{MR4660833}.
The corresponding Hardy inequality follows by discarding this remainder. 
Moreover, the weight is super-median for the perturbation series associated with the corresponding potential. This motivates our study of conditions under which such weights are also invariant for the perturbed kernel.

A central question is to understand, in a general setting, the origin of~\eqref{eq:i-6} and its connection with Dynkin's formula~\eqref{eq:i-3}, see Lemma~\ref{ex:1}. A natural approach to deriving the perturbed identities~\eqref{eq:i-1} and~\eqref{eq:i-2} is then to combine~\eqref{eq:i-3} with Duhamel's formula~\eqref{eq:i-4}, see Theorem~\ref{prop:tp-init} and Corollary~\ref{cor:eq_ineq}.
At this point, however, a genuine obstruction arises: even when a function satisfies Dynkin’s formula for the unperturbed kernel, invariance under the perturbed kernel cannot be expected in general.
Recall that~\eqref{eq:i-6}, with $\beta$ in place of $\delta$, and~\eqref{eq:i-3} are valid throughout the range $\beta\in(0,d-\alpha)$.
The exclusion of the value $d-\alpha-\delta$ from the range of $\beta$ in~\eqref{eq:i-2} is not merely a technical restriction. Indeed, we still have
\[
\int_{\RR^d}\tp(t,x,y)|y|^{-(d-\alpha-\delta)}\,\d{y}\leq|x|^{-(d-\alpha-\delta)},
\qquad t>0,\ x\in\RR^d,
\]
yet equality does not hold if $\delta\in[0,(d-\alpha)/2)$, see~\cite[Corollary~6.3]{MR4595537}, even though
\[
\kappa_{d-\alpha-\delta}-\kappa_\delta=0.
\]
Thus, for $\delta\in(0,(d-\alpha)/2)$, the potential $\kappa|x|^{-\alpha}$ is associated with the two weights $|x|^{-\delta}$ and  $|x|^{\delta-(d-\alpha)}$. 
Although both of them are obtained from~\eqref{eq:i-6} by suitable choices of $f$, the latter is super-median for $\tp$ but not invariant.

In the present paper, we study Dynkin-type identities under positive and negative Schr\"odinger perturbations of non-negative kernels. Our approach separates the algebraic consequences of Duhamel's formula from the additional integrability conditions that suffice for invariance, and makes the role of finite cancellation explicit. For positive perturbations, an additional issue is the distinction between the perturbation series and other non-negative solutions of Duhamel's equation~\eqref{eq:tp-Duh}.

\subsection*{Results}
First we discuss positive perturbations. Theorem~\ref{prop:tp-init} provides a perturbed Dynkin-type identity, while Corollary~\ref{cor:eq_ineq} gives a practical condition ensuring invariance.
In earlier studies of Hardy perturbations, proving invariance required a sequence of lemmas tailored to the kernel under consideration; see, for example, \cite{bogdan2024}.
Our results considerably simplify this procedure: once $\Phi$ satisfies Dynkin's formula for $p$  and $\phi$, and $\phi(\d{y})=\Phi(y)V(\d{y})$, invariance for the perturbation series follows  from the single integrability condition~\eqref{eq:eq_ineq} holding for all $t>0$ and $x\in X$.
Without~\eqref{eq:eq_ineq}, the equality $\phi(\d{y})=\Phi(y)V(\d{y})$ alone does not determine the behaviour of $\Phi$ under $\tp$; in particular, $\Phi$ may be invariant, super-median, or sub-median, see Remark~\ref{rem:3}. The same criterion applies to any non-negative kernel $\tp$ satisfying Duhamel's formula for $p$ and $V$. The argument requires neither symmetry nor the Chapman--Kolmogorov property.

For the perturbation series, Theorem~\ref{thm:super_median} shows that $\Phi$ is super-median whenever $\Phi(y)V(\d{y})\leq\phi(\d{y})$, without the additional integrability assumption~\eqref{eq:eq_ineq}. Here the choice of the perturbation series is essential: it is the minimal non-negative solution of Duhamel's equation, whereas $\Phi$ need not be super-median for other non-negative solutions.
The structure of these other solutions is described in Lemma~\ref{lem:omega4}: every finite non-negative solution of Duhamel's equation is the sum of the perturbation series and a non-negative solution of the corresponding homogeneous equation.
Two examples illustrate different aspects of this non-uniqueness. For the Bessel kernels on the half-line, different solutions corresponding to the same inverse-square potential may have different invariant power functions, see Corollaries~\ref{cor:1} and~\ref{cor:2}, as well as Remark~\ref{rem:3}. For the fractional Laplacian on the half-line, we show that
$
\Phi(x)-\int_0^\infty\tp(t,x,y)\Phi(y)\d{y}
$
is a non-trivial solution of the homogeneous equation for a suitable choice of $\Phi$; in the critical case, the corresponding weight includes a logarithmic factor, see Lemma~\ref{lem:psi-construction} and Example~\ref{ex:9}.

For negative perturbations, we start from a sub-median version of Dynkin's formula \eqref{eq:init-n} and assume that a non-negative kernel satisfying Duhamel’s formula \eqref{eq:Duh-n} for $p$ and $-U$ is available.
Theorem~\ref{prop:tp-init-n} gives the corresponding perturbed Dynkin-type identity under the integrability condition~\eqref{ineq:finite-n}. Corollary~\ref{rem:tp-init-n} provides a simpler sufficient integrability condition~\eqref{ineq:p-finite-n}, which also allows us to deduce invariance when $\psi(\d{y})=\Psi(y)U(\d{y})$.
Compensated kernels can also be useful in verifying the
sub-median version of
 Dynkin's formula in specific examples. We establish this formula for power functions in Examples~\ref{ex:4.3} and~\ref{ex:4}.
 
The remainder of the paper is organized as follows.
Section~\ref{sec:DD} develops the basic properties of Dynkin's formula and Duhamel's formula and illustrates them with examples.
Section~\ref{sec:D-Pos} studies positive perturbations, first through Duhamel's formula and then through the perturbation series.
Section~\ref{sec:D-Neg} establishes the corresponding results for negative perturbations and presents constructions of functions satisfying the sub-median version of Dynkin's formula.

\subsection*{Notation and basic definitions}

To avoid repeatedly excluding the value $\infty$, we shall work with extended non-negative functions and adopt the following standard conventions \cite[page 18]{MR924157} that $a+\infty=\infty+a=\infty$ if $0\leq a \leq \infty$, and
\[
a \cdot \infty=\infty \cdot a =
\begin{cases}
\infty \qquad &\mbox{if } 0<a\leq \infty, \\
0 &\mbox{if } a=0.
\end{cases}
\]
Then,  for a measure space $(X,\mathcal{M}, m)$ with a non-negative measure $m$,  an $\mathcal{M}$-measurable function $f: X \to [0,\infty]$ and 
a constant $0 \leq c\leq\infty$,
\[
c\int_X f(x)\, m(\d x) = \int_X c f(x)\, m(\d x),
\]
cf. \cite[page 20]{MR924157}.
Throughout the paper, we will frequently interchange  the order of integration, as guaranteed by the conclusion of Tonelli's theorem. Accordingly, we  shall impose suitable assumptions.
We say that a measure $m$ is \textit{s-finite} if there exists a sequence $(m_k: k\in\mathbb{N})$ of finite non-negative measures on $(X,\mathcal{M})$ such that for every $A\in\mathcal{M}$, the sequence $(m_k(A): k\in\mathbb{N})$ is non-decreasing, 
and
\[
m(A)=\lim_{k\to \infty} m_k(A).
\]
Equivalently, $m$ is a countable sum of finite non-negative measures, see \cite{MR3642325}.
Since the conclusion of Tonelli's theorem holds for such measures, we shall henceforth assume that all  measures under consideration are \textit{s-finite} and, in particular, \textit{non-negative}.

\begin{remark}
Clearly, every non-negative $\sigma$-finite measure is s-finite. 
If $g : X\to [0,\infty]$ is $\mathcal{M}$-measurable and $\lambda$ is s-finite, then 
\[ 
m(\d x)= g(x) \lambda (\d x)
\]
is also s-finite. Indeed, 
if $m_k(A)=\int_A \min (g(x),k) \lambda_k(\d x)$, where $\lambda_k$ are finite and increase to $\lambda$, then $m_k$ are finite and increase to $m$.
\end{remark}

\begin{remark}
For a non-negative measurable function $f$ and s-finite measures $\nu$ and $\mu$, whenever at least one of the integrals on the left-hand side is finite, we use the abbreviation
\[
\int_X f(x)\,\nu(\d x)
-
\int_X f(x)\,\mu(\d x)
=
\int_X f(x)\,[\nu(\mathrm{d}x)-\mu(\d x)].
\]
\end{remark}

Let $(X,\mathcal{M}, m)$ be a measure space, and let
 $\mathcal{B}_{(0,\infty)}$ denote the $\sigma$-algebra of Borel subsets of $(0,\infty)$. 
Throughout the paper, 
\[
\begin{aligned}
p   &:(0,\infty)\times X\times X\to[0,\infty],\\
\tp &:(0,\infty)\times X\times X\to[0,\infty]
\end{aligned}
\]
are $\mathcal{B}_{(0,\infty)}\times \mathcal{M}\times \mathcal{M}$-measurable functions.
Following the literature,
we introduce several definitions.

\begin{definition}
\rm
We say that an $\mathcal{M}$-measurable function $h: X \to [0,\infty]$ is \textit{super-median} for $p$ if 
\[
\int_X p(t,x,y) h(y)\, m(\d y) \le h(x), \qquad t>0,\; x \in X,
\]
\textit{sub-median} for $p$ if
 \[
\int_X p(t,x,y) h(y)\, m(\d y) \ge h(x), \qquad t>0,\; x \in X,
\]
and \textit{invariant} for $p$ if
\[
\int_X p(t,x,y) h(y) \,m(\d y) = h(x), \qquad t>0,\; x \in X.
\]
\end{definition}

\begin{definition}
\rm
We say that 
an $\mathcal{M}$-measurable
function $\Phi: X \to [0,\infty]$ satisfies  (super-median)  \textit{Dynkin's formula} for $p$ and
$\phi$ provided
$\phi$ is a measure on $(X,\mathcal{M})$, and
\begin{align}\label{eq:init}
\int_X p(t,x,y)\Phi(y)\, m(\d y) + \int_0^t \int_X p(s,x,y)\, \phi(\d y) \d s =\Phi(x), \qquad t>0,\; x\in X.
\end{align}
\end{definition}

\begin{definition}
\rm
We say that
$\tp$  satisfies \textit{Duhamel's formula}
for $p$ and $V$ provided
$V$ is a measure on $(X,\mathcal M)$, and
\begin{align}\label{eq:tp-Duh}
\tp(t,x,y)
=
p(t,x,y)
+
\int_0^t\int_X
\tp(s,x,z)p(t-s,z,y)\,
V(\d z)\d s,
\qquad
t>0,\;x,y\in X.
\end{align}
Whenever this holds, we say that
$\tp$ is a Schr\"odinger perturbation of $p$ by $V$.
\end{definition}

Depending on the choice of $V$, the solution $\tp$ to \eqref{eq:tp-Duh} may or may not be comparable to $p$. In the latter situation, super-median and invariant functions for $\tp$ provide valuable tools for estimating~$\tp$.

\begin{definition}
\rm
We say that $p$ satisfies \textit{the Chapman--Kolmogorov equations} if
\begin{align}\label{eq:p-C-K}
\int_X p(s,x,z)p(t-s,z,y)\,m(\d z)
=p(t,x,y),
\qquad
0<s<t,\;x,y\in X.
\end{align}
In this case, $p$ is called a (time-homogeneous) \textit{transition density}.
\end{definition}

We end this section with a comment on Duhamel's formula.
We say that $p$ is symmetric if $p(t,x,y)=p(t,y,x)$ for all
$t>0$ and $x,y\in X$.
If both $p$ and $\tp$ are symmetric, then exchanging the spatial
variables and changing the time variable shows that \eqref{eq:tp-Duh} is equivalent to
\begin{align*}
\tp(t,x,y)
=
p(t,x,y)
+
\int_0^t\int_X
p(s,x,z)\tp(t-s,z,y)\,
V(\d z)\d s,
\qquad
t>0,\;x,y\in X.
\end{align*}
The perturbation series introduced in Section~\ref{subsec:DF-2} satisfies both forms,
without any symmetry assumption.
The general considerations concerning Duhamel's formula in Section~\ref{subsec:DF-2}
can also be adapted to this alternative form.
However, the arguments in Section~\ref{sec:D-Pos} are tailored to \eqref{eq:tp-Duh},
where the order of the kernels allows us to apply Dynkin's formula
directly to the unperturbed kernel $p$.

\section{Dynkin and Duhamel}\label{sec:DD}

\subsection{Dynkin's formula}\label{subsec:DF}

If $\Phi$ satisfies Dynkin's formula, then it is
super-median. However, the converse fails in general,
as shown in Example~\ref{ex:supermedian-boundary}.
We first provide a general method of constructing $\Phi$ and $\phi$ satisfying \eqref{eq:init}.

\begin{definition}
We say that $\Theta$ is \textit{the zero-resolvent} for $p$ and $\theta$  provided that $\theta$ is a measure on $(X,\mathcal{M})$, and
\begin{align}\label{eq:T_t}
\Theta(x) = \int_0^\infty\int_X p(t,x,y) \, \theta(\d{y})\d t, \qquad x \in X.
\end{align}
\end{definition}

\begin{lemma}\label{lem:A1}
Assume that
$\Theta$ is the zero-resolvent for $p$ and $\theta$. If $p$ is a transition density, then $\Theta$ satisfies Dynkin's formula for $p$ and $\theta$.
\end{lemma}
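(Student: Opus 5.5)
The plan is to compute $\int_X p(t,x,y)\Theta(y)\,m(\d y)$ directly from the definition \eqref{eq:T_t} and then use the Chapman--Kolmogorov equations \eqref{eq:p-C-K}. Fix $t>0$ and $x\in X$. Since all functions are non-negative and measurable and all measures are s-finite, Tonelli's theorem lets us interchange the order of integration freely:
\[
\int_X p(t,x,y)\Theta(y)\,m(\d y)
=\int_0^\infty\int_X\Big(\int_X p(t,x,y)p(s,y,z)\,m(\d y)\Big)\theta(\d z)\,\d s .
\]
The inner integral equals $p(t+s,x,z)$ by \eqref{eq:p-C-K}, applied with the pair of times $(t,\,t+s)$ in place of $(s,t)$. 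This is legitimate because $0<t<t+s$ for $s>0$. The single value $s=0$ has Lebesgue measure zero and can be ignored.

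Next I substitute $u=t+s$, which turns the right-hand side into
\[
\int_t^\infty\int_X p(u,x,z)\,\theta(\d z)\,\d u .
\]
By \eqref{eq:T_t}, the sum of this quantity and $\int_0^t\int_X p(u,x,z)\,\theta(\d z)\,\d u$ is exactly $\Theta(x)$. That identity is precisely \eqref{eq:init} with $\Phi=\Theta$ and $\phi=\theta$. Here additivity of the integral over $(0,t]\cup(t,\infty)$ holds in $[0,\infty]$ with the stated conventions, so no subtraction is ever performed.

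The only delicate points are measurability and the interchange of integrals. Joint measurability of $(s,y,z)\mapsto p(t,x,y)p(s,y,z)$ follows from the $\mathcal{B}_{(0,\infty)}\times\mathcal{M}\times\mathcal{M}$-measurability of $p$. Measurability of $\Theta$ follows from Tonelli. The measures $m$, $\theta$ and Lebesgue measure are s-finite, so Tonelli applies. Because all of this is handled by the standing assumptions, no finiteness of $\Theta$ is needed and the argument works with values in $[0,\infty]$. I expect no step to be a real obstacle; the main thing to check is that the Chapman--Kolmogorov equation is used with the correct time ordering.
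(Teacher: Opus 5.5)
Your proof is correct and follows the paper's argument. The paper splits $\Theta(x)$ at time $t$ and rewrites the tail $\int_t^\infty\int_X p(s,x,y)\,\theta(\mathrm{d}y)\,\mathrm{d}s$ as $\int_X p(t,x,z)\Theta(z)\,m(\mathrm{d}z)$ using the substitution $s=t+u$, the Chapman--Kolmogorov equations and Tonelli's theorem; your computation is the same chain of equalities read in the opposite direction.
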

\begin{proof}
First we write
\begin{align*}
\Theta(x)
&= \int_0^t\int_X p(s,x,y)\, \theta(\d y) \d s 
+\int_t^\infty\int_X p(s,x,y)\, \theta(\d y) \d s.
\end{align*}
Next, by the Chapman--Kolmogorov equations \eqref{eq:p-C-K},
\begin{align*}
\int_t^\infty\int_X p(s,x,y)\, \theta(\d y) \d s
&= \int_0^\infty \int_X p(t+u,x,y) \, \theta(\d y) \d u\\
&= \int_X p(t,x,z)  \int_0^\infty \int_X p(u,z,y)\, \theta(\d y) \d u \,m(\d z)\\
&= \int_X p(t,x,z)\Theta(z)\, m(\d z).
\end{align*}
\end{proof}

Following \cite{MR3460023},
for  a measure
$\mu$ on $(X,\mathcal{M})$ and
a non-decreasing function
$f : (0,\infty)\ \to [0,\infty)$,
we consider 
\begin{align}\label{eq:hbyf}
h(x) = \int_0^\infty\int_X  f(t)p(t,x,y) \, \mu(\d{y})\d{t}, \qquad x \in X.
\end{align}

Below, $f(b^+)=\lim_{x\to b^+} f(x)$ for $b\geq 0$.
We show how \eqref{eq:hbyf} reduces to \eqref{eq:T_t}.

\begin{lemma}\label{ex:1}
Let $\mu$ be a measure on $(X,\mathcal{M})$ and
let $f : (0,\infty)\ \to [0,\infty)$ be non-decreasing.
Denote by $\lambda$ the unique
Borel measure on $(0,\infty)$
such that
\[
f(a^+)-f(0^+)= \lambda((0,a]), \qquad a > 0.
\]
Let $h$ be given by \eqref{eq:hbyf}.
If $p$ is a transition density,
then
$h$ is the zero-resolvent for $p$ and 
$\theta$, where
\[
\theta(\d x)=\bar{h}(x)m(\d x) + f(0^+) \mu(\d x),
\]
and
\[
\bar{h}(x)=\int_{(0,\infty)} \int_X p(t,x,y) \, \mu(\d y) \lambda (\d t), \qquad x\in X.
\]
\end{lemma}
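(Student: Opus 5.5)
We must show
\[
h(x)=\int_0^\infty\int_X p(t,x,y)\,\theta(\d y)\,\d t .
\]
The plan is to rewrite $f(t)$ as $f(0^+)$ plus a $\lambda$-integral, swap integrals with Tonelli's theorem, and then use the Chapman--Kolmogorov equations \eqref{eq:p-C-K} to shift time.

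First, since $f$ is non-decreasing, $f(t)\ge f(t^-)$, and $f(t)$ and $f(t^+)$ differ only on the countable set of discontinuity points of $f$. Changing $f$ on a Lebesgue-null set does not affect the $\d t$ integral in \eqref{eq:hbyf}. So we may replace $f(t)$ by
\[
f(t^+)=f(0^+)+\lambda((0,t])=f(0^+)+\int_{(0,\infty)}\mathbf 1_{\{s\le t\}}\,\lambda(\d s).
\]
Substituting this into \eqref{eq:hbyf} and splitting, we get
\[
h(x)=f(0^+)\int_0^\infty\int_X p(t,x,y)\,\mu(\d y)\,\d t+\int_{(0,\infty)}\int_s^\infty\int_X p(t,x,y)\,\mu(\d y)\,\d t\,\lambda(\d s).
\]
The interchange is justified by Tonelli's theorem, because all integrands are non-negative and all measures are s-finite. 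The measure $\lambda$ is $\sigma$-finite since $f$ is finite-valued, and the constant $f(0^+)\in[0,\infty)$ is handled by the conventions of the notation section. The first term is already the $f(0^+)\mu$ part of $\theta$.

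For the second term, substitute $t=s+u$ and apply \eqref{eq:p-C-K} with the split $p(s+u,x,y)=\int_X p(u,x,z)p(s,z,y)\,m(\d z)$. Putting the later time $s$ next to $\mu$ gives
\[
\int_{(0,\infty)}\int_0^\infty\int_X p(u,x,z)\int_X p(s,z,y)\,\mu(\d y)\,m(\d z)\,\d u\,\lambda(\d s).
\]
Using Tonelli once more to move the $\lambda(\d s)$ integral inside, next to the $\mu(\d y)$ integral, the inner expression becomes $\bar h(z)$. The second term is therefore
\[
\int_0^\infty\int_X p(u,x,z)\,\bar h(z)\,m(\d z)\,\d u .
\]
Adding the two terms gives exactly $\int_0^\infty\int_X p(t,x,y)\,\theta(\d y)\,\d t$. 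The measure $\bar h\,m+f(0^+)\mu$ is s-finite by the first Remark, so $\theta$ is admissible.

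The only delicate point is the first step: the identification of $f(t)$ with $f(0^+)+\lambda((0,t])$ holds only almost everywhere, which suffices for Lebesgue integrals in $t$. Everything else is bookkeeping with Tonelli's theorem and the Chapman--Kolmogorov equations, which we are allowed to use because $p$ is assumed to be a transition density.
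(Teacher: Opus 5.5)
Your proof is correct and follows the paper's argument essentially step by step. You replace $f(t)$ by $f(t^+)=f(0^+)+\lambda((0,t])$, split off the $f(0^+)\mu$ term, and handle the $\lambda$-part via Tonelli's theorem, the shift $t=s+u$, and the Chapman--Kolmogorov equations to produce $\bar h$. Your added justifications only fill in details the paper leaves implicit: that $f$ and $f(\cdot^+)$ differ on a countable (Lebesgue-null) set, that $\lambda$ is $\sigma$-finite, and that $\theta$ is s-finite.
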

\begin{proof}
Since $f$ is non-decreasing,
\begin{align*}
h(x)&=\int_0^\infty \int_X f(t^+) p(t,x,y) \, \mu(\d y) \d t\\
&=\int_0^\infty \int_X \int_{(0,\infty)} \mathds{1}_{(0,t]}(s) \, \lambda (\d s)\,  p(t,x,y)  \, \mu(\d y) \d t +
\int_0^\infty \int_X p(t,x,y) \, f(0^+)\mu(\d y) \d t
\end{align*}
and, by the Chapman--Kolmogorov equations \eqref{eq:p-C-K}, the first term equals
\begin{align*}
&\int_{(0,\infty)} \int_0^\infty \int_X  \mathds{1}_{[s,\infty)}(t) \,  p(t,x,y)  \, \mu(\d y) \d t \, \lambda (\d s)\\
&=\int_{(0,\infty)}  \int_{-s}^\infty  \int_X  \mathds{1}_{[s,\infty)}(s+u) p(s+u,x,y) \,  \mu(\d y) \d u \, \lambda (\d s)\\
&=\int_{(0,\infty)}  \int_0^\infty  \int_X   \int_X p(u,x,z) p(s,z,y)\, m(\d z)  \mu(\d y) \d u \, \lambda(\d s)\\
&=\int_0^\infty  \int_X p(u,x,z) \left( \int_{(0,\infty)} \int_X  p(s,z,y)\,   \mu(\d y)\, \lambda (\d s) \right)  m(\d z)\d u\\
&=\int_0^\infty \int_X p(u,x,z) \bar{h}(z)\, m(\d z) \d u.
\end{align*}
\end{proof}

We provide several examples of zero-resolvents. 
 
\begin{example}\label{ex:5}
Let $d\in\mathbb{N}$  satisfy $d\geq 3$ and $g(t,x,y)=g(t,y-x)= (4\pi t)^{-d/2}e^{-\frac{|y-x|^2}{4t}}$, $t>0$, $x,y\in\Rd$, be the Gauss-Weierstrass kernel. Then, for all  $\beta\in (0,d-2)$  and $x\in \Rd$,
\[
 |x|^{-\beta} = \int_0^\infty \int_{\Rd}g(t,x,y) \,\beta (d-2-\beta)|y|^{-\beta-2} \, \d y  \d t.
\]
To see this, we recall from \cite[(24)]{MR3460023} that, for $-\infty<\gamma<d/2-1$,
\begin{align}\label{eq:gauss-1}
\int_0^\infty s^\gamma g(s,x)  \, \d s = \frac{\Gamma(d/2-1-\gamma)}{4^{\gamma+1}\pi^{d/2}}  |x|^{2\gamma-d+2}.
\end{align}
Thus,
\[
\int_0^\infty \int_{\Rd} t^\gamma g(t,x,y) \, \delta_0(\d y) \d t = \frac{\Gamma(d/2-1-\gamma)}{4^{\gamma+1}\pi^{d/2}}  |x|^{2\gamma-d+2}
\]
and
\[
\int_0^\infty \int_{\Rd} \gamma t^{\gamma-1} g(t,x,y) \, \delta_0(\d y) \d t = \frac{\gamma \Gamma(d/2-\gamma)}{4^{\gamma}\pi^{d/2}}  |x|^{2\gamma-d}.
\]
Now, we apply
Lemma~\ref{ex:1} to $p=g$, $\mu=\delta_0$, $f(t)=t^\gamma$,  and $\gamma=d/2-1-\beta/2$, and use the above explicit expression.
\end{example}

\begin{example}\label{Ex:gausshalfline}
Let $g_D(t,x,y) = \frac{1}{\sqrt{4\pi t}} \left(e^{-\frac{(x-y)^2}{4t}}- e^{-\frac{(x+y)^2}{4t}}\right)$, $t>0$, $x,y\in D$, be the Dirichlet heat kernel of $\Delta$ on $D=(0,\infty)$. 
Then, for all $\beta\in (0,1)$ and $x\in D$,
\[
 x^{1-\beta} = \int_0^\infty \int_D g_D(t,x,y) \beta(1-\beta) y^{-1-\beta} \, \d{y} \d{t}.
\]
Equivalently, in the Doob conditioned setting with $m(\d x)=x^2 \d x$,
\[
 x^{-\beta} = \int_0^\infty \int_D \frac{g_D(t,x,y)}{x \, y} \beta(1-\beta) y^{-\beta-2} \, m(\d{y}) \d{t}.
\]
Indeed, the Green function is $G_D(x,y) = \int_0^\infty g_D(t,x,y) \d{t} = x \land y$. Thus,
\begin{align*}
\int_0^\infty \int_D g_D(t,x,y) y^{-1-\beta} \, \d{y} \d{t} 
	&= \int_D G_D(x,y) y^{-1-\beta} \, \d{y} \\
	&=\int_0^x y^{-\beta} \, \d{y} + x\int_x^\infty y^{-1-\beta} \, \d{y} 
	= \frac{1}{\beta(1-\beta)} x^{1-\beta}.
\end{align*}
Fortunately, the Green function admits a simple explicit formula. A direct adaptation of the approach used in Example~\ref{ex:5}, however, may lead to lengthy and rather tedious calculations. The argument would be as follows.
For any $\gamma \in (0, \frac{1-\beta}{2})$, take
$f(t) = t^\gamma$ and $\mu(\d{y}) = y^{-\beta - 2 \gamma - 1}\d{y}$ in \eqref{eq:hbyf}, and then apply Lemma~\ref{ex:1}. 
First, integrating by parts, we get for $\gamma<1/2$ such that $\gamma\neq -1/2$,
\begin{align*}
\int_0^\infty s^\gamma g_D(s,x,y) \,\d{s} = \frac{\Gamma\left(\frac{1}{2}-\gamma\right)}{4^{\gamma+1}\sqrt{\pi}(\gamma+\frac12)} \left( |x+y|^{2\gamma+1} - |x-y|^{2\gamma+1} \right).
\end{align*}
Next, using the substitution $y =x z$, we obtain
\[
\int_0^\infty \int_D t^\gamma g_D(t,x,y)  y^{-\beta-2\gamma-1} \d{y} \d{t} = c_{\beta,\gamma} \, x^{1-\beta}
\]
and
\[
\int_0^\infty \int_D \gamma t^{\gamma-1} g_D(t,x,y)  y^{-\beta-2\gamma-1} \d{y} \d{t} = \bar{c}_{\beta,\gamma} \, x^{-1-\beta}.
\]
For $\gamma \in (0, \frac{1-\beta}{2})$ both constants are positive and finite.
It  can be shown that the constant 
$\bar{c}_{\beta,\gamma}/c_{\beta,\gamma}$ 
does not depend on $\gamma$ and is equal to $\beta(1-\beta)$, although a direct verification of the equality is not entirely straightforward.
In the present setting, however, both constants can be computed explicitly by adapting the approach from Example~\ref{Ex:gausshalfspace}.
\end{example}

\begin{example}\label{Ex:gausshalflinev2}
As in Example \ref{Ex:gausshalfline}, let $g_D(t,x,y)$ be the Dirichlet heat kernel of $\Delta$ on $D=(0,\infty)$.
Then, for every $x\in D$,
\[
x\ln\left(\frac{x+1}{x}\right)= \int_0^\infty \int_D g_D(t,x,y) \frac1{y(y+1)^2}\, \d y \d t.
\]
Equivalently, for every $x\in D$,
\[
\ln\left(\frac{x+1}{x}\right)= \int_0^\infty \int_D \frac{g_D(t,x,y)}{x\, y} \frac1{y^2(y+1)^2}\, m(\d y) \d t.
\]
Indeed,
\begin{align*}
\int_0^\infty \int_D g_D(t,x,y) \frac1{y(y+1)^2}\, \d y \d t 
&= \int_D G_D(x,y) \frac{1}{y(y+1)^2}\, \d{y} \\
&= \int_0^x \frac{1}{(y+1)^2}\, \d{y} + x\int_x^\infty \frac{1}{y(y+1)^2} \, \d{y} \\
&= 1-\frac{1}{x+1}+ x \left(\ln(x+1) - \ln x - \frac{1}{x+1}\right) \\
&= x\ln\left(\frac{x+1}{x}\right).
\end{align*}
\end{example}

\begin{example}
Let $d\in\mathbb{N}$ and $\alpha\in (0,2)$ satisfy $d>\alpha$. Let $p(t,x,y)=p(t,y-x)$, $t>0$, $x,y\in\Rd$, be the heat kernel of $-(-\Delta)^{\alpha/2}$.
Then, for all  $\beta\in (0,d-\alpha)$ and $x\in\Rd$,
\[
 |x|^{-\beta} = \int_0^\infty\int_{\Rd} p(s,x,y) \kappa_\beta |y|^{-\beta-\alpha}\, \d y \d s.
\]
We recall from \cite[p. 236]{MR3460023} that, for $-1< \gamma <(d-\alpha)/\alpha$,
\[
\int_0^\infty s^\gamma p(s,x)\, \d s = 
\frac{\Gamma(\gamma+1)\Gamma(\frac{d-\alpha(\gamma+1)}{2})}{\Gamma(\frac{\alpha(\gamma+1)}{2}) 4^{\frac{\alpha(\gamma+1)}{2}} \pi^{d/2}} |x|^{\alpha(\gamma+1)-d}.
\]
Thus, for $0< \gamma <(d-\alpha)/\alpha$,
\[
\int_0^\infty \int_{\Rd} t^\gamma p(t,x,y) \, \delta_0(\d y) \d t = c_{\gamma}  |x|^{\alpha\gamma-(d-\alpha)}
\]
and
\[
\int_0^\infty \int_{\Rd} \gamma t^{\gamma-1} p(t,x,y) \, \delta_0(\d y) \d t = \bar{c}_{\gamma}  |x|^{\alpha \gamma-d}.
\]
The explicit constants $c_{\gamma}$ and $\bar{c}_{\gamma}$ satisfy $\bar{c}_{\gamma}/c_{\gamma}=\kappa_{(d-\alpha)-  \alpha\gamma}$, with $\kappa_\beta$ defined in the Introduction.
Now, we apply
Lemma~\ref{ex:1} to $p$, $\mu=\delta_0$, $f(t)=t^\gamma$,  and $\gamma=(d-\alpha)/\alpha-\beta/\alpha$, and use the above explicit expression.
\end{example}

For more examples involving non-local operators in the whole space $\Rd$, see \cite[Section~2.3]{MR4705624} and \cite[Section~3]{MR4746984}, where $p$ is the heat kernel for the fractional relativistic operator and a fractional power of the Bessel operator, respectively.

\begin{example}\label{Ex:stablehalfline}
Let $\alpha\in (0,2)$ and $p_D(t,x,y)$, $t>0$, $x,y\in D$, be the Dirichlet heat kernel of $-(-\Delta)^{\alpha/2}$ on $D=(0,\infty)$. Then, see \cite[Theorem 1.2]{MR4660833}, for all $\beta\in (0,1)$ and $x\in D$,
\begin{align*}
x^{\alpha/2-\beta}
=\int_0^\infty \int_D p_D(t,x,y) \frac{\Gamma(\beta+\alpha/2)\Gamma(1-\beta+\alpha/2)}{\Gamma(\beta)\Gamma(1-\beta)}  y^{-\alpha/2-\beta}\,  \d{y} \d{t}.
\end{align*}
In the Doob conditioned setting with $m(\d x)=x^\alpha \d x$,
\begin{align*}
x^{-\beta}
=\int_0^\infty \int_D \frac{p_D(t,x,y)}{x^{\alpha/2}y^{\alpha/2}} \frac{\Gamma(\beta+\alpha/2)\Gamma(1-\beta+\alpha/2)}{\Gamma(\beta)\Gamma(1-\beta)}  y^{-\beta-\alpha}\,  m(\d{y}) \d{t}.
\end{align*}
Note that, for $\alpha=2$, the value of the above constant agrees with that in Example~\ref{Ex:gausshalfline}. Here, the  approach through formula~\eqref{eq:hbyf}  goes by  taking, for any $\gamma\in (0,\frac{1-\beta}{\alpha})$, $f(t) = t^\gamma$ and $\mu(\mathrm{d}y) = y^{-\beta-\alpha \gamma-\alpha/2} \, \mathrm{d}y$;
\[
\int_0^\infty \int_D t^\gamma p_D(t,x,y)  y^{-\beta-\alpha\gamma-\alpha/2} \d{y} \d{t} = c_{\beta,\gamma} \, x^{\alpha/2-\beta}
\]
and
\[
\int_0^\infty \int_D \gamma t^{\gamma-1} p_D(t,x,y)  y^{-\beta-\alpha\gamma-\alpha/2} \d{y} \d{t} = \bar{c}_{\beta,\gamma} \, x^{-\alpha/2-\beta}.
\]
Obtaining explicit expressions for $c_{\beta,\gamma}$ and $\bar{c}_{\beta,\gamma}$ appears to be a rather delicate problem.
The approach based on the relation \eqref{eq:T_t}, now formulated in general in Lemma~\ref{ex:1}, was first introduced in \cite{MR4660833}  as a starting point for analysing the ratio $\bar{c}_{\beta,\gamma}/c_{\beta,\gamma}$. The explicit evaluation of this ratio, which relies on lengthy and technically involved calculations using the Ikeda--Watanabe formula, is the main result of \cite{MR4660833}.
A semi-explicit formula for the Green function is also available, see \cite[(30)]{MR3888401}, but applying it as in Example~\ref{Ex:gausshalfline} would involve substantial technical work.
\end{example}

\begin{example}[Generalization of Example~\ref{Ex:gausshalfline}]\label{Ex:gausshalfspace}
Let $d\geq 1$ and $H = \{x= (\tilde{x},x_d) \in \Rd : x_d>0\}$ be a half-space. We consider $g_H(t,x,y)$, $t>0$, $x,y\in H$, the transition density of Brownian motion killed on exiting $H$. Then,
for all $\beta\in(0,d)$ and $x\in H$,
\[
x_d |x|^{-\beta} =\int_0^\infty \int_H g_H(t,x,y)  \beta(d-\beta)  y_d |y|^{-\beta-2} \, \d{y} \d{t}.
\]
In the Doob conditioned setting with $m(\d x)=x_d^2 \d x$,
\[
|x|^{-\beta}=\int_0^\infty \int_H \frac{g_H(t,x,y)}{x_d \, y_d}  \beta(d-\beta) |y|^{-\beta-2} m(\d{y})\d{t}.
\]
To see this, first recall that
\[
g_H(t,x,y)=g(t,x-y)-g(t,x-y^*),
\qquad
y^*=(\widetilde y,-y_d),
\]
where  $g(t,x)= (4\pi t)^{-d/2}e^{-\frac{|x|^2}{4t}}$.
By symmetry, $g(s,y^*)=g(s,y)$, and hence
\begin{align*}
\int_H g_H(t,x,y)y_dg(s,y) \, \d{y}
=\int_{y_d>0}g(t,x-y)y_dg(s,y)\,\d{y} -\int_{y_d>0}g(t,x-y^*)y_dg(s,y)\,\d{y}.
\end{align*}
By substitution in the latter integral,
\[
\int_H g_H(t,x,y)y_dg(s,y)\,\d{y}
=
\int_{\Rd}g(t,x-y)y_dg(s,y)\,\d{y}.
\]
Since, see also \cite[page 465]{MR1978999},
\begin{align}\label{eq:4g}
g(t,x-y)g(s,y)
=
g(t+s,x)\,
g\left(\frac{st}{s+t},
y-\frac{s}{s+t}x\right),
\end{align}
we get
\begin{align*}
\int_{\Rd}g(t,x-y)y_dg(s,y)\,\d{y}
&=g(t+s,x)
  \int_{\Rd}y_d\,
  g\left(\frac{st}{s+t},
  y-\frac{s}{s+t}x\right)\d{y}\\
&=g(t+s,x)
  \int_{\Rd}
  \left(z_d+\frac{s}{s+t}x_d\right)
  g\left(\frac{st}{s+t},z\right)\d{z}\\
&=\frac{s x_d}{s+t}g(t+s,x).
\end{align*}
Consequently, by Tonelli's theorem, the substitution $u=s+t$ and \eqref{eq:gauss-1},
\begin{align*}
\int_0^\infty\int_H
g_H(t,x,y)y_d|y|^{-\beta-2}\d{y}\d{t}
&= \frac{2^{d-\beta-2}\pi^{d/2}}{\Gamma(\beta/2+1)} x_d
\int_0^\infty\int_0^\infty
\frac{s^{(d-\beta-2)/2}}{s+t}
g(s+t,x) \, \d{s}\d{t}\\
&=\frac{2^{d-\beta-2}\pi^{d/2}}{\Gamma(\beta/2+1)} \frac{2}{d-\beta}\,
x_d\int_0^\infty
u^{(d-\beta-2)/2}g(u,x)\,\d{u}\\
&= \frac{1}
{\beta(d-\beta)}\,
x_d|x|^{-\beta}.
\end{align*}
\end{example}

We continue our considerations on Dynkin's formula.

\begin{lemma}\label{lem:9}
Assume that  $\Phi$ satisfies Dynkin's formula for $p$ and $\phi$.
Then, for every function $\Omega$ invariant for~$p$, 
the function $\Phi+\Omega$ also satisfies
Dynkin's formula for $p$ and $\phi$. \end{lemma}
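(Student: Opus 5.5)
The plan is to add the defining identity \eqref{eq:init} for $\Phi$ to the invariance identity for $\Omega$ and use linearity of the integral in the integrand, taking care of the extended-valued conventions. By assumption, $\Omega: X\to[0,\infty]$ is $\mathcal M$-measurable and satisfies
\[
\int_X p(t,x,y)\Omega(y)\,m(\d y)=\Omega(x), \qquad t>0,\; x\in X.
\]
Then $\Phi+\Omega$ is again an $\mathcal M$-measurable function with values in $[0,\infty]$, with the convention $a+\infty=\infty$. Since $\phi$ is unchanged, it remains a measure on $(X,\mathcal M)$.

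Fix $t>0$ and $x\in X$. For non-negative measurable integrands, additivity of the integral holds without any finiteness assumption. It follows from the monotone convergence theorem, and the conventions on $[0,\infty]$ stated in the notation section make it valid even when some values are infinite. Hence
\[
\int_X p(t,x,y)\bigl(\Phi(y)+\Omega(y)\bigr)\,m(\d y)=\int_X p(t,x,y)\Phi(y)\,m(\d y)+\int_X p(t,x,y)\Omega(y)\,m(\d y).
\]
Adding $\int_0^t\int_X p(s,x,y)\,\phi(\d y)\d s$ to both sides and using commutativity and associativity of addition in $[0,\infty]$, we regroup the right-hand side as the left-hand side of \eqref{eq:init}, which equals $\Phi(x)$, plus $\int_X p(t,x,y)\Omega(y)\,m(\d y)$, which equals $\Omega(x)$. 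This gives exactly \eqref{eq:init} with $\Phi+\Omega$ in place of $\Phi$.

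The only point requiring any care is that no subtraction is involved. Every quantity lies in $[0,\infty]$, so infinite values cause no indeterminate forms such as $\infty-\infty$. Accordingly, I expect no real obstacle: the argument is purely algebraic and uses only additivity of integrals of non-negative functions.
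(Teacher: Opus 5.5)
Your proof is correct and follows the paper's own proof: the paper likewise just adds the invariance identity for $\Omega$ to Dynkin's formula for $\Phi$. Your remarks on additivity of integrals of $[0,\infty]$-valued functions spell out a point the paper leaves implicit.
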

\begin{proof}
 Clearly, since $\Omega$ is invariant for $p$,
\[
\int_X p(t,x,y)(\Phi(y) +\Omega (y))\, m(\d{y}) + \int_0^t \int_X p(s,x,y) \phi(\d{y}) \d{s} = \Phi(x) + \Omega(x),
\qquad x\in X.
\]
\end{proof}

\begin{lemma}\label{lem:11}
Assume that $\Phi$ satisfies Dynkin's formula for $p$ and $\phi$. If $p$ is a transition density and $\Phi(x)<\infty$ for every $x\in X$, then
\[
\Phi(x)=\Theta(x)+
\Omega(x), \qquad x\in X,
\]
where $\Theta$ is the zero-resolvent for $p$ and $\phi$, and $\Omega$ is invariant for $p$ and satisfies
\begin{align}\label{eq:Omega-rep}
\Omega(x)=\lim_{t\to\infty} \int_X p(t,x,y)\Phi(y) \, m(\d{y}), \qquad x\in X.
\end{align}
\end{lemma}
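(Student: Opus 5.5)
Write $P_t\Phi(x)=\int_X p(t,x,y)\Phi(y)\,m(\d y)$ and $\Theta_t(x)=\int_0^t\int_X p(s,x,y)\,\phi(\d y)\d s$. Dynkin's formula \eqref{eq:init} then reads $P_t\Phi+\Theta_t=\Phi$ for all $t>0$. The plan is to pass to the limit $t\to\infty$ in this identity, using the finiteness of $\Phi$ to subtract, and then to prove invariance of the limit by Chapman--Kolmogorov.

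First, $t\mapsto\Theta_t(x)$ is non-decreasing and converges to $\Theta(x)$, the zero-resolvent for $p$ and $\phi$ in \eqref{eq:T_t}, by monotone convergence. Since $\Phi(x)<\infty$, we have $\Theta_t(x)\le\Phi(x)<\infty$. Hence $P_t\Phi(x)=\Phi(x)-\Theta_t(x)$ is finite and non-increasing in $t$. So the limit in \eqref{eq:Omega-rep} exists, and
\[
\Omega(x)=\Phi(x)-\Theta(x)\in[0,\infty).
\]
This gives the decomposition $\Phi=\Theta+\Omega$, with $\Theta\le\Phi<\infty$ and $\Omega\ge0$ measurable.

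It remains to show that $\Omega$ is invariant for $p$. By Lemma~\ref{lem:A1}, $\Theta$ satisfies Dynkin's formula for $p$ and $\phi$, so $P_t\Theta+\Theta_t=\Theta$. Since $\Theta$ is finite, $P_t\Theta=\Theta-\Theta_t$ is finite. Because $P_t$ is linear on non-negative functions, $P_t\Phi=P_t\Theta+P_t\Omega$. Subtracting finite quantities gives
\[
P_t\Omega=P_t\Phi-P_t\Theta=(\Phi-\Theta_t)-(\Theta-\Theta_t)=\Omega .
\]
This already yields invariance, without using the limit representation.

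As a consistency check of \eqref{eq:Omega-rep}, one can also argue directly. Chapman--Kolmogorov gives $P_tP_s\Phi=P_{t+s}\Phi$. Then let $s\to\infty$, using monotone convergence downward with the domination $P_s\Phi\le\Phi$ and $P_t\Phi<\infty$. This route needs the monotone limit to be dominated by an integrable function, which is why the subtraction argument above is preferable.

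The main obstacle is justifying the subtractions, that is, avoiding expressions of the form $\infty-\infty$. This is handled entirely by the assumption $\Phi<\infty$: it forces $\Theta$, $\Theta_t$, $P_t\Phi$ and $P_t\Theta$ to be finite pointwise.
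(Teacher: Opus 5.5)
Your proof is correct. The decomposition step is the same as in the paper: $\Theta_t\uparrow\Theta$, everything is finite because $\Phi<\infty$, and you let $t\to\infty$ in \eqref{eq:init}. The invariance step is where you differ.

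The paper proves $P_t\Omega=\Omega$ by exactly the route you relegate to a ``consistency check''. It writes $P_{t+s}\Phi=P_t(P_s\Phi)$ by Chapman--Kolmogorov and lets $s\to\infty$ under the integral. The required domination comes for free: $p(t,x,z)P_s\Phi(z)\le p(t,x,z)\Phi(z)$, and $\int_X p(t,x,z)\Phi(z)\,m(\d z)\le\Phi(x)<\infty$. So that route has no deficiency, and your remark that it is less preferable is unfounded.

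You instead identify $\Omega=\Phi-\Theta$ first. You then get invariance by subtracting Dynkin's formula for $\Theta$ (Lemma~\ref{lem:A1}) from Dynkin's formula for $\Phi$, with all terms finite. In your argument, Chapman--Kolmogorov enters only through Lemma~\ref{lem:A1}.

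Your version needs no limit interchange. It amounts to a converse of Lemma~\ref{lem:9}: if $\Theta\le\Phi<\infty$ both satisfy Dynkin's formula for the same $\phi$, then $\Phi-\Theta$ is invariant.

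The paper's argument shows that $\lim_{t\to\infty}\int_X p(t,x,y)\Phi(y)\,m(\d y)$ is invariant using only that $\Phi$ is finite and super-median. Monotonicity in $t$ then follows from Chapman--Kolmogorov. Hence the paper's argument yields the invariant part of any finite super-median function, independently of Dynkin's formula.
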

\begin{proof}
By Dynkin's formula \eqref{eq:init}, the integral
$\int_X p(t,x,y)\Phi(y) \, m(\d{y})$ is finite and non-increasing  in $t>0$.
Therefore, $\Omega$ is well-defined by \eqref{eq:Omega-rep}.
By the Chapman--Kolmogorov equations
\eqref{eq:p-C-K}, 
and the dominated convergence theorem,
for $t>0$ and $x\in X$,
\begin{align*}
\Omega(x) = \lim_{s\to\infty}\int_X p(t+s,x,y)\Phi(y) \, m(\d{y})
&= \lim_{s\to\infty}\int_X \int_X p(t,x,z)p(s,z,y) \Phi(y) \, m(\d{y}) \, m(\d{z})\\
&= \int_X p(t,x,z) \left(\lim_{s\to\infty}\int_X p(s,z,y) \Phi(y) \, m(\d{y})\right) \, m(\d{z})\\
&= \int_X p(t,x,z) \Omega(z) \, m(\d{z}).
\end{align*}
This proves the invariance.
By letting $t \to \infty$ in Dynkin's formula \eqref{eq:init}, we get
\[
\Phi(x) = \lim_{t\to\infty}\int_X p(t,x,y)\Phi(y) \, m(\d{y}) + \lim_{t\to\infty}\int_0^t \int_X p(s,x,y) \phi(\d{y}) \d{s}.
\]
The first limit is an invariant function for $p$, and the second is the zero-resolvent for $p$ and $\phi$.
\end{proof}

\begin{example}\label{ex:supermedian-boundary}
We use the notation of Example~\ref{Ex:gausshalfline}.
The constant function $u\equiv 1$ on $D$ is super-median
for $g_D$, but does not satisfy Dynkin's formula
with any (s-finite) measure $\phi$.

Indeed, by the reflection principle
\cite[Chapter~III, Proposition~3.7]{MR1725357},
\[
\int_D g_D(t,x,y)u(y)\,\d y
=\frac{1}{\sqrt{\pi t}}\int_0^x e^{-z^2/(4t)}\,\d z
<1=u(x), \qquad t>0,\,x\in D.
\]
If Dynkin's formula held for some $\phi$, then
by Lemma~\ref{lem:11} and Tonelli's theorem, we would have
\[
1=u(x)=\int_D G_D(x,y)\,\phi(\d y)
=\int_0^\infty (x\wedge y)\,\phi(\d y),
\qquad x\in D.
\]
Taking $x=1$ yields
$
\int_0^\infty (1\wedge y)\,\phi(\d y)=1
$.
Hence, by the dominated convergence theorem,
\[
    1=\lim_{x\to 0^+}
    \int_0^\infty (x\wedge y)\,\phi(\d y)=0,
\]
which is a contradiction.
\end{example}

\begin{remark}
If $p$ is probabilistic, i.e., $\int_X p(t,x,y)\,m(\d y)=1$, $t>0$, $x\in X$, then $\Omega \equiv 1$ is invariant for $p$. Hence, from Lemma~\ref{lem:9} we deduce that for a given measure $\phi$ there may be more than one function $\Phi$ satisfying Dynkin's formula \eqref{eq:init}.
\end{remark}

\subsection{Duhamel's formula}\label{subsec:DF-2} 

\begin{definition}
We say that $\rho$ is the perturbation series for $p$ and $V$ provided that
$V$ is a measure on $(X,\mathcal{M})$, and
\begin{align}\label{eq:pertser}
\rho(t,x,y) =\sum_{n=0}^{\infty} p_n(t,x,y),
\qquad t>0,\; x,y\in X,
\end{align}
where
\begin{equation}\label{eq:pn}
\begin{aligned}
p_0(t,x,y)&=p(t,x,y),\\
p_n(t,x,y)&=\int_0^t\int_X p_{n-1}(s,x,z) p_0(t-s,z,y)\,V(\d z)\d s ,\qquad n\geq 1.
\end{aligned}
\end{equation}
\end{definition}

We record the following fact, adapted from the general theory developed in \cite{MR2457489} and based solely on the algebraic structure of the perturbation series and repeated changes of the order of integration.

\begin{remark}\label{rem:1}
Assume that $\rho$ is the perturbation series for $p$ and $V$. If $p$ is a transition density, then $\rho$ is also a transition density.
\end{remark}

We now discuss Duhamel's formula.

\begin{lemma}\label{lem:bp-Duhamel}
Assume that $\rho$ 
is the perturbation series
for $p$ and $V$. Then, $\rho$
satisfies Duhamel's formula for $p$ and $V$.
\end{lemma}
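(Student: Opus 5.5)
The argument is purely algebraic and works at the level of extended non-negative functions, so no finiteness is needed. All interchanges of sums and integrals will be justified by Tonelli's theorem, since all integrands are non-negative and all measures are s-finite, together with monotone convergence for series of non-negative terms.

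Fix $t>0$ and $x,y\in X$. Start from the right-hand side of Duhamel's formula \eqref{eq:tp-Duh} with $\tp=\rho$:
\[
p(t,x,y)+\int_0^t\int_X \rho(s,x,z)p(t-s,z,y)\,V(\d z)\d s .
\]
Substitute the series \eqref{eq:pertser} for $\rho(s,x,z)$. Since the terms $p_{n-1}(s,x,z)p(t-s,z,y)$ are non-negative, the sum may be pulled outside the double integral, by Tonelli's theorem applied to counting measure times $V(\d z)\,\d s$. This gives
\[
\sum_{n=1}^\infty \int_0^t\int_X p_{n-1}(s,x,z)p_0(t-s,z,y)\,V(\d z)\d s=\sum_{n=1}^\infty p_n(t,x,y),
\]
where the equality is just the recursive definition \eqref{eq:pn}.

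Adding $p_0(t,x,y)=p(t,x,y)$ then recovers $\sum_{n\ge0}p_n(t,x,y)=\rho(t,x,y)$, which is \eqref{eq:tp-Duh}. The computation holds in $[0,\infty]$ under the conventions $a+\infty=\infty$ and $0\cdot\infty=0$ fixed in the Notation section.

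The only point needing care is measurability. Each $p_n$ must be $\mathcal{B}_{(0,\infty)}\times\mathcal{M}\times\mathcal{M}$-measurable, so that the integrals in \eqref{eq:pn} make sense and $\rho$ is measurable. I would verify this by induction on $n$. The integrand $(s,z)\mapsto p_{n-1}(s,x,z)p(t-s,z,y)\mathds 1_{(0,t)}(s)$ is jointly measurable in all variables, and Tonelli's theorem for s-finite measures yields measurability of the iterated integral in $(t,x,y)$. I expect this bookkeeping to be the only mild obstacle; the identity itself is immediate.
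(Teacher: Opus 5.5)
Your proof is correct and is essentially the paper's argument. The paper likewise writes $\rho=p+\sum_{n\ge0}p_{n+1}$, inserts the recursive definition \eqref{eq:pn}, and interchanges the sum with the integral against $V(\d z)\,\d s$ using non-negativity. Your extra check that each $p_n$ is measurable is a detail the paper leaves implicit, but it does not change the route.
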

\begin{proof}
We have
\begin{align*} 
\rho(t,x,y) &= p(t,x,y) + \sum_{n=0}^\infty p_{n+1} (t,x,y)\\
&= p(t,x,y) + \sum_{n=0}^\infty\int_0^t \int_X p_n (s,x,z) p(t-s,z,y) \,V(\d z)\d s \\
&= p(t,x,y) + \int_0^t \int_X \rho (s,x,z) p(t-s,z,y) \,V(\d z)\d s.
\end{align*}
\end{proof}

\begin{lemma}\label{lem:D+w=D}
Assume that $\tp$ satisfies Duhamel's formula for $p$ and $V$.
Then, for every $\mathcal{B}_{(0,\infty)}\times \mathcal{M}\times \mathcal{M}$-measurable function
$\omega: (0,\infty)\times X \times X \to [0,\infty]$ such that
\begin{align}\label{eq:omega-inv}
\omega(t,x,y)=\int_0^t\int_X \omega (s,x,z) p(t-s,z,y)\, V(\d z) \d s,
\qquad t>0,\; x,y\in X,
\end{align}
the function
$\tp + \omega$ also satisfies
Duhamel's formula for $p$ and $V$.
\end{lemma}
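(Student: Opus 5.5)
The plan is a direct linearity argument: add the defining identity of $\omega$ to Duhamel's formula for $\tp$, and use additivity of the non-negative integral to recombine the two integrals into one. Since all functions take values in $[0,\infty]$, no subtraction occurs and no finiteness assumptions are needed.

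First, note that $\tp+\omega$ is a $\mathcal{B}_{(0,\infty)}\times\mathcal{M}\times\mathcal{M}$-measurable map into $[0,\infty]$, because both summands are. Fix $t>0$ and $x,y\in X$. Adding \eqref{eq:tp-Duh} and \eqref{eq:omega-inv} gives
\begin{align*}
\tp(t,x,y)+\omega(t,x,y)
&=p(t,x,y)+\int_0^t\int_X \tp(s,x,z)p(t-s,z,y)\,V(\d z)\d s\\
&\quad+\int_0^t\int_X \omega(s,x,z)p(t-s,z,y)\,V(\d z)\d s .
\end{align*}

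Next, combine the two integrals. The integral of non-negative extended-valued functions is additive, including when some values are infinite, under the convention $a+\infty=\infty$ recalled in the Notation section. Applying this additivity first to the inner integral against $V(\d z)$ and then to the outer integral in $s$ yields
\[
\int_0^t\int_X \bigl(\tp(s,x,z)+\omega(s,x,z)\bigr)p(t-s,z,y)\,V(\d z)\d s .
\]
The inner integrands are measurable in $z$, and the inner integrals are measurable in $s$ by Tonelli's theorem, which applies because $V$ is s-finite. This is exactly Duhamel's formula \eqref{eq:tp-Duh} with $\tp+\omega$ in place of $\tp$.

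There is no real obstacle here. The only point needing care is to justify the additivity with possibly infinite values, together with the measurability of the iterated integrals; both follow from the standing conventions and Tonelli's theorem for s-finite measures.
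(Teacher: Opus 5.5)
Your proposal is correct and matches the paper's proof: add \eqref{eq:omega-inv} to Duhamel's formula \eqref{eq:tp-Duh} and merge the two integrals using additivity of integrals of non-negative functions. Your remarks on measurability, infinite values and Tonelli's theorem make explicit what the paper simply calls ``clear''.
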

\begin{proof}
Clearly, by \eqref{eq:omega-inv},
\begin{align*}
\tp(t,x,y)+\omega(t,x,y)
=
p(t,x,y)
+
\int_0^t\int_X
(\tp(s,x,z) +\omega(s,x,z)) p(t-s,z,y)\,
V(\d z)\d s
\end{align*}
for all $t>0$, $x,y\in X$.
\end{proof}

\begin{remark}\label{rem:psi-inv}
If a measurable function $\psi\geq 0$ satisfies
\begin{align}\label{eq:psi-inv}
\psi(t,y)=\int_0^t\int_X \psi (s,z) p(t-s,z,y)\, V(\d z) \d s,
\qquad t>0,\; y\in X,
\end{align}
then for any measurable function $f \ge 0$, $\omega(t,x,y) = f(x)\psi(t,y)$ satisfies \eqref{eq:omega-inv}. In particular, we can take $f \equiv1$. Examples of such functions $\psi$ will be given below.
We would also like to point out a possible method of constructing $\psi$: given Duhamel's formula \eqref{eq:tp-Duh}, one could divide both sides by a suitably chosen function $f$ such that $\lim_{x\to a} \tp(t,x,y)/f(x)$ exists and is non-trivial, while $\lim_{x\to a} p(t,x,y)/f(x)=0$; treating the integral term of \eqref{eq:tp-Duh} might, however, be cumbersome. See Remark~\ref{rem:4} for further discussion.
\end{remark}

\begin{lemma}\label{lem:13}
Assume that $\omega$ is measurable, non-negative and satisfies \eqref{eq:omega-inv}. Then, \eqref{eq:omega-inv} is also satisfied by 
\[
\omega_1(t,x,y)= \int_0^t \omega(t-s,x,y)\, \d{s},
\qquad
t>0,\; x,y\in X.
\]
\end{lemma}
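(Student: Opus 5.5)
The plan is a direct computation: substitute \eqref{eq:omega-inv} into the definition of $\omega_1$, then reorder the integrations with Tonelli's theorem. All functions involved are non-negative and measurable, and all measures are s-finite, so every interchange is justified.

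Fix $t>0$ and $x,y\in X$. Measurability of $\omega_1$ comes from Tonelli's theorem, since $(s,t)\mapsto \mathds{1}_{(0,t)}(s)\omega(t-s,x,y)$ is jointly measurable. We write $\omega_1(t,x,y)=\int_0^t \omega(u,x,y)\,\d u$ via $u=t-s$, and apply \eqref{eq:omega-inv} at time $u$:
\[
\omega_1(t,x,y)=\int_0^t\int_0^u\int_X \omega(r,x,z)p(u-r,z,y)\,V(\d z)\,\d r\,\d u .
\]
The integrand vanishes unless $0<r<u<t$. We substitute $u=r+v$, so that $0<v<t-r$, and use Tonelli to put the $v$-integral outermost:
\[
\omega_1(t,x,y)=\int_0^t\int_0^{t-v}\int_X \omega(r,x,z)p(v,z,y)\,V(\d z)\,\d r\,\d v .
\]

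Next we integrate in $r$ first, inside the $z$-integral:
\[
\omega_1(t,x,y)=\int_0^t\int_X \left(\int_0^{t-v}\omega(r,x,z)\,\d r\right) p(v,z,y)\,V(\d z)\,\d v=\int_0^t\int_X \omega_1(t-v,x,z)p(v,z,y)\,V(\d z)\,\d v .
\]
Finally, the change of variables $s=t-v$ gives $\int_0^t\int_X \omega_1(s,x,z)p(t-s,z,y)\,V(\d z)\,\d s$. This is exactly \eqref{eq:omega-inv} for $\omega_1$.

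The only point needing care is the bookkeeping of the triangular region $\{0<r<u<t\}$ under the two substitutions. No finiteness is needed: with the conventions for $\infty$, all quantities live in $[0,\infty]$, and Tonelli's theorem holds there.
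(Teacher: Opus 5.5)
Your proof is correct and is essentially the paper's argument run in the opposite direction. The paper starts from the right-hand side of \eqref{eq:omega-inv} for $\omega_1$, expands $\omega_1$, swaps the time integrals over the triangle by Tonelli, shifts time, and recognizes \eqref{eq:omega-inv} at time $t-r$; you apply \eqref{eq:omega-inv} first and then do the same reordering backwards. Your remark on the measurability of $\omega_1$ is a point the paper leaves implicit, though it needs joint measurability in $(s,t,x,y)$, not only in $(s,t)$.
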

\begin{proof}
By Tonelli's theorem,
\begin{align*}
&\int_0^t\int_X \omega_1 (s,x,z) p(t-s,z,y)\, V(\d z) \d s \\
&= \int_0^t\int_X \int_0^s\omega (s-r,x,z) p(t-s,z,y) \, \d{r} \,V(\d z) \d s \\
&= \int_0^t \int_r^t \int_X \omega (s-r,x,z) p(t-s,z,y) \,V(\d z) \d{s} \,  \d r\\
&= \int_0^t\int_0^{t-r} \int_X  \omega (s,x,z) p(t-r-s,z,y) \, V(\d z)  \d{s} \, \d r = \int_0^t \omega(t-r,x,y)\,\d{r} = \omega_1(t,x,y).
\end{align*}
\end{proof}

\begin{lemma}\label{lem:12}
Assume that $\tp$ satisfies Duhamel's formula for $p$ and $V$.
Then, for every $N\in \mathbb{N}\cup\{0\}$, 
\begin{align}\label{eq:tp-Duh-it}
\tp(t,x,y) =\sum_{n=0}^{N}p_n(t,x,y)
+\int_0^t \int_X \tp (s,x,z) p_N(t-s,z,y) \,V(\d z)\d s,
\qquad
t>0,\;x,y\in X.
\end{align} 
\end{lemma}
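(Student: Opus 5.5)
We argue by induction on $N$. For $N=0$, \eqref{eq:tp-Duh-it} reads $\tp = p_0 + \int_0^t\int_X \tp(s,x,z)p_0(t-s,z,y)\,V(\d z)\d s$, which is exactly Duhamel's formula \eqref{eq:tp-Duh}, since $p_0=p$.

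For the inductive step, assume \eqref{eq:tp-Duh-it} holds for some $N$. In the remainder term $\int_0^t\int_X \tp(s,x,z)p_N(t-s,z,y)\,V(\d z)\d s$ we substitute Duhamel's formula \eqref{eq:tp-Duh} for $\tp(s,x,z)$. This splits the remainder into
\[
\int_0^t\int_X p(s,x,z)p_N(t-s,z,y)\,V(\d z)\d s
+\int_0^t\int_X\int_0^s\int_X \tp(r,x,w)p(s-r,w,z)p_N(t-s,z,y)\,V(\d w)\d r\,V(\d z)\d s .
\]
All integrands are non-negative and all measures are s-finite, so Tonelli's theorem lets us reorder freely. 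In the second term we integrate first over $z$ and $s\in(r,t)$, substitute $u=s-r$, and obtain
\[
\int_0^t\int_X \tp(r,x,w)\,q_{N}(t-r,w,y)\,V(\d w)\d r,
\qquad
q_N(\tau,w,y):=\int_0^\tau\int_X p(u,w,z)p_N(\tau-u,z,y)\,V(\d z)\d u .
\]

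It therefore remains to establish the auxiliary identity
\[
p_{N+1}(\tau,w,y)=\int_0^\tau\int_X p_0(u,w,z)p_N(\tau-u,z,y)\,V(\d z)\d u,
\]
that is, that the recursion \eqref{eq:pn}, which is defined by convolving on the right with $p_0$, can equally be written by convolving on the left with $p_0$. This is the main (though routine) obstacle. We prove it by induction on $N$. For $N=0$ it is the definition of $p_1$. For the step, expand $p_{N}$ on the right-hand side via \eqref{eq:pn}, use associativity of the time-space convolution (again by Tonelli's theorem and a linear change of time variables), and apply the induction hypothesis to recognise $p_{N}=p_0*p_{N-1}$; then $p_0*(p_{N-1}*p_0)=(p_0*p_{N-1})*p_0=p_N*p_0=p_{N+1}$.

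With this identity in hand, the first term of the split equals $p_{N+1}(t,x,y)$ and the second equals $\int_0^t\int_X\tp(r,x,w)p_{N+1}(t-r,w,y)\,V(\d w)\d r$. This gives \eqref{eq:tp-Duh-it} for $N+1$. No finiteness is required anywhere: the computation consists only of equalities in $[0,\infty]$ under the conventions fixed earlier.
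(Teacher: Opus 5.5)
Your proof is correct, but it runs the inductive step in the opposite direction from the paper.

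The paper substitutes the induction hypothesis \eqref{eq:tp-Duh-it} for $\tp(s,x,z)$ inside Duhamel's formula \eqref{eq:tp-Duh}. Each term $\int_0^t\int_X p_n(s,x,z)p(t-s,z,y)\,V(\d z)\d s$ is then $p_{n+1}(t,x,y)$ by the definition \eqref{eq:pn}. After Tonelli, the inner integral of the remainder is $\int_r^t\int_X p_N(s-r,w,z)p(t-s,z,y)\,V(\d z)\d s=p_{N+1}(t-r,w,y)$, again directly by \eqref{eq:pn}. Because $p$ always stays on the right, the paper needs no auxiliary identity.

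You instead substitute \eqref{eq:tp-Duh} into the remainder. This puts $p_0$ on the left of $p_N$ and forces you to prove $p_{N+1}=p_0*p_N$ via associativity of the time--space convolution. Your inductive argument for this is sound (Tonelli plus the shift $s=r+u$), so nothing is missing; it is just an extra step.

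What your detour buys is that auxiliary identity itself. Summing it over $N$ gives $\rho=p+\int_0^t\int_X p(s,x,z)\rho(t-s,z,y)\,V(\d z)\d s$. This is the alternative form of Duhamel's formula for the perturbation series, valid without any symmetry assumption, which the paper only asserts in passing.
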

\begin{proof}
It suffices to iterate Duhamel's formula \eqref{eq:tp-Duh} and change the order of integration to obtain the recursive formulae. 
Indeed, for $N=0$ we have \eqref{eq:tp-Duh}. Now, suppose that \eqref{eq:tp-Duh-it} holds for $N\in \mathbb{N}\cup\{0\}$. Then, by Tonelli's theorem,
\begin{align*}
&\tp(t,x,y) = p(t,x,y) +\int_0^t \int_X \tp (s,x,z) p(t-s,z,y) \,V(\d z)\d s \\
&= p(t,x,y)
 +\int_0^t \int_X
   \Big[\sum_{n=0}^{N}p_n(s,x,z)  \\
&\phantom{{}= p(t,x,y)
 +\int_0^t \int_X \Bigl[}
   +\int_0^s \int_X \tp(r,x,w) p_N(s-r,w,z)\,V(\d w)\d r
   \Big]
   p(t-s,z,y)\,V(\d z)\d s \\
&= \sum_{n=0}^{N+1}p_n(t,x,y) +\int_0^t \int_X  \tp (r,x,w) \int_r^t \int_X p_N(s-r,w,z) p(t-s,z,y) \,V(\d z)\d s \,V(\d w)\d r \\
&= \sum_{n=0}^{N+1}p_n(t,x,y) +\int_0^t \int_X  \tp (r,x,w) p_{N+1}(t-r,w,y) \,V(\d w)\d r.
\end{align*}
Thus, the result follows by induction.
\end{proof}

\begin{lemma}\label{lem:omega4}
Assume that $\tp$ satisfies Duhamel's formula for $p$ and $V$.
If $\tp(t,x,y)<\infty$ for all $t>0$, $x,y\in X$, then 
\[
\tp(t,x,y) =\rho(t,x,y) +\omega(t,x,y),
\qquad
t>0,\; x,y\in X,
\]
where $\rho$ is the perturbation series for $p$ and $V$, and $\omega$ is measurable, non-negative, satisfies \eqref{eq:omega-inv} and 
\begin{align}\label{eq:omega-repr}
\omega(t,x,y)=\lim_{N\to \infty} \int_0^t \int_X \tp (s,x,z) p_N(t-s,z,y) \,V(\d z)\d s,
\qquad
t>0,\; x,y\in X.
\end{align}
\end{lemma}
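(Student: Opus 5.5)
Use Lemma~\ref{lem:12}. For fixed $t,x,y$, set
\[
R_N(t,x,y)=\int_0^t\int_X \tp(s,x,z)p_N(t-s,z,y)\,V(\d z)\d s ,
\]
so that $\tp=\sum_{n=0}^N p_n+R_N$. All terms are non-negative. Hence the partial sums $\sum_{n=0}^N p_n(t,x,y)$ are non-decreasing in $N$ and bounded by $\tp(t,x,y)<\infty$. Therefore the series $\rho(t,x,y)$ converges to a finite value with $\rho\le\tp$. Since $\tp$ is finite, $R_N(t,x,y)=\tp(t,x,y)-\sum_{n=0}^N p_n(t,x,y)$ is well defined as a difference of finite numbers. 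It is non-increasing in $N$ and converges to $\omega(t,x,y):=\tp(t,x,y)-\rho(t,x,y)\ge 0$. This gives the decomposition $\tp=\rho+\omega$ together with \eqref{eq:omega-repr}. Measurability of $\omega$ follows because each $R_N$ is measurable (Tonelli) and $\omega$ is a pointwise limit; equivalently, $\omega=\tp-\rho$ with both terms finite and measurable.

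The remaining step, which I expect to be the only delicate one, is to verify \eqref{eq:omega-inv}. Subtraction is allowed since everything is finite. By Duhamel's formula \eqref{eq:tp-Duh} for $\tp$ and Lemma~\ref{lem:bp-Duhamel} for $\rho$,
\[
\tp(t,x,y)-\rho(t,x,y)=\int_0^t\int_X \tp(s,x,z)p(t-s,z,y)\,V(\d z)\d s-\int_0^t\int_X \rho(s,x,z)p(t-s,z,y)\,V(\d z)\d s .
\]
Both integrals are finite, because each is bounded by the finite quantity $\tp(t,x,y)$ (using $\rho\le\tp$). Moreover $\tp(s,x,z)=\rho(s,x,z)+\omega(s,x,z)$ pointwise with non-negative, finite summands. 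So by linearity of the integral for non-negative functions,
\[
\int\!\!\int\tp\,p\,V\,\d s=\int\!\!\int\rho\,p\,V\,\d s+\int\!\!\int\omega\,p\,V\,\d s .
\]
Since the $\rho$-integral is finite, we can cancel it. This yields $\omega(t,x,y)=\int_0^t\int_X\omega(s,x,z)p(t-s,z,y)\,V(\d z)\d s$, which is \eqref{eq:omega-inv}.

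The main care point is therefore to avoid subtracting infinite quantities. The finiteness hypothesis on $\tp$, together with $\rho\le\tp$, guarantees that every expression subtracted above is finite. No monotone or dominated convergence in $N$ is needed beyond the monotone limit defining $\omega$.
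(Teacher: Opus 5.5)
Your proof is correct. The decomposition and the representation \eqref{eq:omega-repr} are obtained as in the paper, by monotonicity of the remainders in Lemma~\ref{lem:12} and finiteness of $\tp$. The route differs in how you verify \eqref{eq:omega-inv}.

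The paper works directly with the remainders $R_N$. It passes the limit $N\to\infty$ under $\int_0^t\int_X \cdot\; p(t-s,z,y)\,V(\d z)\d s$ by dominated convergence. The domination comes from $R_N\le\tp$ together with $\int_0^t\int_X\tp(s,x,z)p(t-s,z,y)\,V(\d z)\d s\le\tp(t,x,y)<\infty$. It then changes the order of integration to recognize $p_{N+1}$, so the limit of the resulting integrals is again $\omega(t,x,y)$ by \eqref{eq:omega-repr}.

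You instead identify $\omega=\tp-\rho$ and subtract Duhamel's formula for $\rho$ (Lemma~\ref{lem:bp-Duhamel}) from Duhamel's formula for $\tp$. You cancel the finite quantity $\int_0^t\int_X\rho(s,x,z)p(t-s,z,y)\,V(\d z)\d s$, after splitting the $\tp$-integral additively into its $\rho$- and $\omega$-parts.

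Your argument is more algebraic. It needs no interchange of limit and integral and no recomputation of the recursion for $p_{N+1}$, at the price of invoking Lemma~\ref{lem:bp-Duhamel}. The paper's argument never refers to $\rho$: it shows that \eqref{eq:omega-inv} is inherited directly by the limit of the remainders in \eqref{eq:omega-repr}.

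Both arguments need the finiteness of $\tp$ in an essential way. The paper uses it to make the limit well defined and to dominate. You use it to justify the subtraction and the cancellation.
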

\begin{proof}
By \eqref{eq:tp-Duh-it}, the integral
$\int_0^t \int_X \tp (s,x,z) p_N(t-s,z,y) \,V(\d z)\d s$ is finite and non-increasing  in $N\in\mathbb{N}$.
Therefore, $\omega$ is well-defined by \eqref{eq:omega-repr}. By the dominated convergence theorem,
\begin{align*}
&\int_0^t\int_X \omega (s,x,z) p(t-s,z,y)\, V(\d z) \d s\\
&= \int_0^t\int_X \lim_{N\to\infty} \left( \int_0^s \int_X \tp (u,x,w) p_N(s-u,w,z) \,V(\d w)\d u \right) p(t-s,z,y)\, V(\d z) \d s\\
&= \lim_{N\to\infty}  \int_0^t\int_X  \tp (u,x,w) \left( \int_u^t \int_X p_N(s-u,w,z) p(t-s,z,y)\, V(\d z) \d s \right) V(\d w)\d u\\
&= \lim_{N\to\infty}  \int_0^t\int_X  \tp (u,x,w) p_{N+1}(t-u,w,y) V(\d w)\d u
= \omega(t,x,y).
\end{align*}
This proves \eqref{eq:omega-inv}.
The rest of the result follows by letting $N \to \infty$ in \eqref{eq:tp-Duh-it}.
\end{proof}

Below we present several examples of functions $\omega$ and $\psi$ satisfying \eqref{eq:omega-inv} and \eqref{eq:psi-inv}, respectively. Examples~\ref{ex:6}--\ref{ex:8} concern functions that are either directly derived from or closely related to the so-called self-similar solutions. We provide further comments on this connection in Remark~\ref{rem:4}. The function in Example~\ref{ex:9} fits into a similar framework, whereas the function appearing in Corollary~\ref{cor:2} is of a different nature.

\begin{example}\label{ex:6}
Let $d\in\mathbb{N}$ satisfy $d\geq 3$ and $g(t,x,y)=g(t,y-x)= (4\pi t)^{-d/2}e^{-\frac{|y-x|^2}{4t}}$, $t>0$,~$x,y\in\Rd$. 
Then, for every $\delta\in(0,d-2)$,
the function $\psi(t,y)=t^\delta |y|^{-\delta}g(t,y)$, $t>0$, $y\in \Rd\setminus\{0\}$, satisfies  \eqref{eq:psi-inv} for $p=g$ and
$V(\d x)= \delta(d-2-\delta)|x|^{-2}\d x$; see also \cite[Section~10.4]{MR1760280}.

Indeed, by \eqref{eq:4g} we get
\begin{align*}
\int_0^t\int_{\Rd} \psi (s,z) &g(t-s,z,y)\, V(\d z) \d s\\
&=\delta(d-2-\delta)g(t,y)
\int_0^t\int_{\Rd}
s^\delta g\left(\frac{s(t-s)}{t},z-\frac{s}{t}y\right)
|z|^{-\delta-2}\,\d z\d s .
\end{align*}
It remains to show that
\[
\delta(d-2-\delta)
\int_0^t\int_{\Rd}
s^\delta g\left(\frac{s(t-s)}{t},z-\frac{s}{t}y\right)
|z|^{-\delta-2}\,\d z \d s
=
t^\delta |y|^{-\delta}.
\]
By  the scaling $g(t,x)=s^d g(s^2t,sx)$, $s>0$, it suffices to consider $t=1$. Using the change of variables $z=sw$ and the scaling, we obtain
\[
\int_0^1\int_{\Rd}
s^\delta g(s(1-s),z-sy)|z|^{-\delta-2}\,\d z\d s
= \int_0^1\int_{\Rd}
s^{-2}g\left(\frac{1-s}{s},w-y\right)|w|^{-\delta-2}\,\d w \d s .
\]
Now, substituting $r=(1-s)/s$, by Example \ref{ex:5}, we get
\[
\int_0^1\int_{\Rd}
s^\delta g(s(1-s),z-sy)|z|^{-\delta-2}\,\d z \d s
= \int_0^\infty\int_{\Rd} g(r,w-y)|w|^{-\delta-2}\,\d w \d r
= \frac{|y|^{-\delta}}{\delta(d-2-\delta)}.
\]
\end{example}

\begin{example}\label{ex:7}
Let $g_D(t,x,y) = \frac{1}{\sqrt{4\pi t}} \left(e^{-\frac{(x-y)^2}{4t}}- e^{-\frac{(x+y)^2}{4t}}\right)$, $t>0$, $x,y\in D$, and $D=(0,\infty)$. Then, for every $\delta\in (0,1)$, the function $\psi(t,y)=t^{\delta-1}y^{1-\delta}g(t,y)= \frac{y^{1-\delta}t^{\delta-1}}{\sqrt{4\pi t}} e^{-\frac{y^2}{4t}}$, $t>0$, $y\in D$,
satisfies  \eqref{eq:psi-inv} for $p=g_D$ and $V(\d x) = \delta(1-\delta) x^{-2}\d x$. 

Indeed, the steps are similar to those in Example~\ref{ex:6}.
We use the identity \eqref{eq:4g} twice to obtain
\begin{align*}
\int_0^t\int_D \psi (s,z) &g_D(t-s,z,y)\, V(\d z) \d s\\
&=\delta(1-\delta)g(t,y)
\int_0^t\int_D
s^{\delta-1} g_D\left(\frac{s(t-s)}{t},z,\frac{s}{t}y\right)
z^{-\delta-1}\,\d z\d s .
\end{align*}
Then, by the scaling $g_D(t,x,y) = sg_D(s^2t,sx,sy)$, $s>0$, we reduce the problem to the case $t=1$. We make
two substitutions: $z=sw$ and  $r=(1-s)/s$, and use 
Example \ref{Ex:gausshalfline} to get
\[
\int_0^1\int_D
s^{\delta-1} g_D(s(1-s),z,sy)z^{-\delta-1}\,\d z \d s
= \int_0^\infty\int_D g_D(r,w,y)w^{-\delta-1}\,\d w \d r
= \frac{y^{1-\delta}}{\delta(1-\delta)}.
\]
\end{example}

\begin{example}\label{ex:8}
Let $d\in\mathbb{N}$ and $\alpha\in (0,2)$ satisfy $d>\alpha$. Let $p(t,x,y)=p(t,y-x)$, $t>0$, $x,y\in\Rd$, be the heat kernel of $-(-\Delta)^{\alpha/2}$, and let
$\tp$ denote the perturbation series for $p$ and $V(\d x) = \kappa |x|^{-\alpha}\d x$, where $\kappa=\kappa_\delta$ for some $\delta\in (0,(d-\alpha)/2]$,  as defined in the Introduction.
Let $\psi(t,y) = \lim_{x\to0} |x|^\delta \tp(t,x,y)$, $t>0$, $y\in \Rd\setminus\{0\}$, be the so-called self-similar solution constructed in \cite{MR4595537}. 
Then, by \cite[Lemma~5.1]{MR4595537} and the symmetry of $p$ in the spatial variables, $\psi$ satisfies \eqref{eq:psi-inv} for $p$ and $V$.
\end{example}

For the one-dimensional fractional Laplacian on $(0,\infty)$ with a Hardy potential, the self-similar theory remains to be developed. Thus, we present a somewhat simpler approach, which is similar in spirit to \cite[Corollary~6.3]{MR4595537}, see also Remark~\ref{rem:psi-inv} and Lemma~\ref{lem:13}.

\begin{lemma}\label{lem:psi-construction}
Assume that $\Phi$ satisfies Dynkin's formula for $p$ and $\phi$, and that  
$\tp$ satisfies Duhamel's formula for $p$ and $V$. Suppose  that $\Phi(x)<\infty$ for every $x\in X$, that $\phi(\d x)= \Phi(x)V(\d x)$, and that $\Phi$ is super-median for $\tp$.
If $p$ and $\tp$ are symmetric, 
then
\[
\psi(t,y)=\Phi(y)-\int_X\tp(t,y,z)\Phi(z)\,m(\d z),\qquad t>0,\; y\in X,
\]
is non-negative and satisfies \eqref{eq:psi-inv}.
\end{lemma}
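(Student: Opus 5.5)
The plan is to compute $\int_X\tp(t,y,z)\Phi(z)\,m(\d z)$ directly, combining Duhamel's formula with Dynkin's formula, so that $\psi$ shows up on both sides. Throughout I write $\widetilde T_t\Phi(y)=\int_X\tp(t,y,z)\Phi(z)\,m(\d z)$, so that $\psi(t,y)=\Phi(y)-\widetilde T_t\Phi(y)$.

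\emph{Non-negativity and finiteness.} Since $\Phi$ is super-median for $\tp$, we have $0\le \widetilde T_t\Phi(y)\le\Phi(y)<\infty$. Hence $\psi$ is well defined and takes values in $[0,\infty)$. It is measurable in $(t,y)$ by Tonelli's theorem, because $\tp$ is jointly measurable.

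\emph{Main computation.} Since $p$ and $\tp$ are symmetric, the remark at the end of the introduction lets me use Duhamel's formula in the alternative form
\[
\tp(t,y,w)=p(t,y,w)+\int_0^t\int_X p(s,y,z)\tp(t-s,z,w)\,V(\d z)\d s .
\]
This ordering is the point of the symmetry assumption: the unknown kernel $\tp$ now sits on the right. I multiply by $\Phi(w)$, integrate in $m(\d w)$, and apply Tonelli's theorem to obtain
\[
\widetilde T_t\Phi(y)=\int_X p(t,y,w)\Phi(w)\,m(\d w)+\int_0^t\int_X p(s,y,z)\,\widetilde T_{t-s}\Phi(z)\,V(\d z)\d s .
\]
Next I substitute $\widetilde T_{t-s}\Phi(z)=\Phi(z)-\psi(t-s,z)$, which is legitimate because every quantity involved is finite. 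This splits the last integral into
\[
\int_0^t\int_X p(s,y,z)\Phi(z)\,V(\d z)\d s-\int_0^t\int_X p(s,y,z)\psi(t-s,z)\,V(\d z)\d s .
\]

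\emph{The main obstacle.} The only delicate point is making this subtraction of integrals legitimate. Since $0\le\psi\le\Phi$, the second integral is bounded by the first. Because $\phi(\d z)=\Phi(z)V(\d z)$, the first integral is exactly the $\phi$-term in Dynkin's formula \eqref{eq:init}, and so it is at most $\Phi(y)<\infty$. With both integrals finite, Dynkin's formula gives
\[
\int_X p(t,y,w)\Phi(w)\,m(\d w)+\int_0^t\int_X p(s,y,z)\Phi(z)\,V(\d z)\d s=\Phi(y).
\]
Therefore
\[
\widetilde T_t\Phi(y)=\Phi(y)-\int_0^t\int_X p(s,y,z)\psi(t-s,z)\,V(\d z)\d s .
\]
Equivalently, $\psi(t,y)=\int_0^t\int_X \psi(t-s,z)p(s,y,z)\,V(\d z)\d s$.

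\emph{Conclusion.} Finally I change variables $s\mapsto t-s$ and use the symmetry $p(s,y,z)=p(s,z,y)$. This turns the last identity into exactly \eqref{eq:psi-inv}, which completes the proof.
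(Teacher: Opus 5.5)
Your proof is correct and is essentially the paper's argument. It uses the same ingredients: non-negativity from the super-median property, finiteness of $\int_0^t\int_X p(s,y,z)\Phi(z)\,V(\d z)\,\d s$ from Dynkin's formula with $\phi=\Phi V$, and Duhamel's formula with Tonelli, with symmetry putting $\tp$ in the right position. The differences are only in organization: you use the backward form of Duhamel's formula and start from $\int_X\tp(t,y,z)\Phi(z)\,m(\d z)$, whereas the paper starts from the right-hand side of \eqref{eq:psi-inv}, swaps the arguments of $\tp$ inside $\psi$ by symmetry, and applies the forward form \eqref{eq:tp-Duh}.
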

\begin{proof}
The non-negativity follows from the super-median property of $\Phi$. 
Since $\Phi$ is finite, all terms in Dynkin's formula are finite. The symmetry gives
\[
\int_0^t\int_X p(t-s,z,y)\Phi(z)\,V(\d{z}) \d{s}
=\Phi(y)-\int_X p(t,y,w)\Phi(w)\,m(\d{w}).
\]
By symmetry, Tonelli's theorem, Duhamel's formula and Dynkin's formula,
\begin{align*}
&\int_0^t\int_X\psi(s,z)p(t-s,z,y)\,V(\d{z}) \d{s}\\
&=\int_0^t\int_X\Phi(z)p(t-s,z,y)\,V(\d{z})\d{s}
-\int_X\Phi(w)\left(\int_0^t\int_X\tp(s,w,z)p(t-s,z,y)\,V(\d{z}) \d{s}\right) m(\d{w})\\
&=\Phi(y)-\int_X\tp(t,y,w)\Phi(w)\,m(\d{w})=\psi(t,y).
\end{align*}
\end{proof}

\begin{example}\label{ex:9}
Let $\alpha\in (0,2)$ and $p_D(t,x,y)$, $t>0$, $x,y\in D$, be the Dirichlet heat kernel of $-(-\Delta)^{\alpha/2}$ on $D=(0,\infty)$. 
Let $\tilde{p}_D$ denote the perturbation series for $p_D$ and $V(\d x)=\kappa x^{-\alpha} \d x$, where
\[
\kappa=\kappa_\delta=\frac{\Gamma(\delta+\alpha/2)\Gamma(1-\delta+\alpha/2)}{\Gamma(\delta)\Gamma(1-\delta)}
\]
for some $\delta\in (0,1/2]$.
Then, for $\delta\in(0,1/2)$,
\[
\psi(t,y)=
y^{\frac{\alpha}{2}+\delta-1}
-\int_D \tilde{p}_D(t,y,z)z^{\frac{\alpha}{2}+\delta-1}\,\d z,
\qquad t>0,\; y\in D,
\]
and, for $\delta=1/2$,
\[
\psi(t,y)=-y^{\frac{\alpha-1}{2}}\ln y
+\int_D \tilde{p}_D (t,y,z)z^{\frac{\alpha-1}{2}}\ln z\,\d z,
\qquad t>0,\; y\in D,
\]
is a non-negative, non-trivial solution of \eqref{eq:psi-inv} for $p=p_D$ and $V$.

We now justify this claim.
To make the dependence on the parameter explicit,
we write $\rho_\delta=\tilde{p}_D$, $V_\delta=V$ and $\psi_\delta=\psi$. For $\beta\in(0,1)$, define $h_\beta(x)=x^{\frac{\alpha}{2}-\beta}$, $x\in D$. We then set $f_\delta(x)=h_{1-\delta}(x)$ for $\delta \in(0,1/2)$, and $f_{\frac12}(x)=- h_{\frac12}(x) \ln x$.
Therefore,
\[
\psi_\delta(t,y)=f_\delta(y)-\int_D \rho_\delta(t,y,z)f_\delta(z)\,\d z.
\]
First, we consider $\delta\in (0,1/2)$.
The non-negativity follows by letting $\beta\to (1-\delta)^-$ in
\cite[(17)]{MR4937881} and applying Fatou's lemma.
Thus, $f_\delta$ is super-median for $\rho_\delta$ and satisfies Dynkin's formula for $p_D$ and $\phi=f_\delta V_\delta$ (see Example \ref{Ex:stablehalfline} and Lemma \ref{lem:A1}). Furthermore, $\rho_\delta$ satisfies Duhamel's formula for $p_D$ and $V_\delta$ (see \cite[(13)]{MR4937881}). Since $p_D$ and $\rho_\delta$ are symmetric, Lemma~\ref{lem:psi-construction} applies with $\Phi=f_\delta$, $p=p_D$, $\tp=\rho_\delta$ and $V=V_\delta$, and yields \eqref{eq:psi-inv}.
Finally, by the estimates of $\tp_D$ (\cite[Theorem 1.1]{MR4937881}), $\int_D \tp_D(t,y,z)z^{\frac{\alpha}{2}+\delta-1}\,\d z \approx y^{\alpha/2-\delta}$ as $y \to 0$. Hence, for $\delta \in(0,1/2)$, $\psi(t,y)>0$  for all sufficiently small $y>0$.

We now consider $\delta=1/2$, and we prove the representation
\[
\psi_{\frac12}(t,y)=\lim_{\beta\to 1/2^-}\frac{\psi_\beta(t,y)}{1-2\beta}.
\]
We have
\begin{align*}
\lim_{\beta\to 1/2}
\frac{h_{\beta}(y)-h_{\frac12}(y)}{\beta-1/2}
= - y^{\frac{\alpha-1}{2}}\ln y=f_{\frac12}(y).
\end{align*}
By \cite[(18)]{MR4937881},
$h_\beta(y)=\int_D\rho_\beta(t,y,z)h_{\beta}(z)\d z$, $\beta\in (0,1/2]$. Thus,
\begin{align*}
\lim_{\beta\to 1/2^-}\frac{\psi_\beta(t,y)}{1-2\beta}
	&= \lim_{\beta\to 1/2^-}
\left(
\frac{h_{1-\beta}(y)-h_{\beta}(y)}{1-2\beta}
-\int_D\rho_\beta(t,y,z)
\frac{h_{1-\beta}(z)-h_{\beta}(z)}{1-2\beta}\,\d z\right)\\
	&=f_{\frac12}(y)-\int_D\rho_{\frac12}(t,y,z)f_{\frac12}(z)\,\d z
=\psi_{\frac12}(t,y).
\end{align*}
Here, we use the dominated convergence theorem. The perturbation series $\rho_\beta$ converges to $\rho_{\frac12}$ and is bounded by $\rho_{\frac12}$ with explicit estimates provided by
\cite[Theorem 1.1]{MR4937881}. Moreover, by the
mean value theorem, the difference quotient is bounded by $z^{\frac{\alpha-1}{2}}(z^{\varepsilon}+z^{-\varepsilon})|\ln z|$ if $|\beta-1/2|\leq\varepsilon<1/2$, which leads to  an integrable dominating function if $\varepsilon<\alpha$.
To establish \eqref{eq:psi-inv}, we use the result for $\delta<1/2$ and pass to the limit as follows:
\[
\psi_{\frac12}(t,y)=\lim_{\beta\to 1/2^-}\int_0^t\int_D \frac{\psi_\beta (s,z)}{1-2\beta} p_D(t-s,z,y)\, V_\beta(\d z) \d s.
\]
It suffices to apply the dominated convergence theorem.
To verify its assumptions, we use the bound on the difference quotient as above, the
finiteness of the integral in \cite[Theorem~1.2]{MR4660833}
and, after rewriting the integral expression using Duhamel's formula, the bound on $\rho_{\frac12}$ from \cite[Theorem~1.1]{MR4937881}.
Non-triviality is similar to the case $\delta<1/2$.
\end{example}

We return to Example~\ref{ex:7} and examine it in greater detail. The following proposition and corollary illustrate how the preceding lemmas can be used to show that, for certain parameters, the Bessel kernel admits a perturbation series representation.

\begin{proposition}\label{prop:1}
Let $D=(0, \infty)$, $\delta\in (-\infty,1)$ and
\[
g_\delta (t,x,y) = \frac{\sqrt{xy}}{2t} I_{\frac{1}{2}-\delta} \left( \frac{xy}{2t} \right) \exp\left(-\frac{x^2+y^2}{4t}\right), \qquad t>0,\; x,y \in D.
\]
Then, for all $\delta,\beta \in (-\infty,1)$ such that $\delta+\beta<1$, we have
\[
g_\delta(t,x,y)=g_\beta (t,x,y)+\int_0^t\int_0^\infty g_\delta(s,x,z) \kappa z^{-2} g_\beta (t-s,z,y) \, \d z \d s, \qquad t>0,\; x,y \in D,
\]
where $\kappa = \delta(1-\delta) - \beta(1-\beta)$.
\end{proposition}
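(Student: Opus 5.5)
The plan is to treat $g_\delta$ as the heat kernel of the Bessel-type operator $L_\delta=\partial_x^2+\delta(1-\delta)x^{-2}$ on $D$ and to prove the identity by the classical ``interpolation'' argument for Duhamel's formula. First I would record the PDE facts, which can be checked directly from the series or asymptotics of $I_\nu$ with $\nu=\frac12-\delta$. For fixed $x$ (resp.\ $y$), $\partial_t g_\delta=L_\delta g_\delta$ in either spatial variable, and $g_\delta$ is symmetric in $x,y$. Since $I_{1/2}(r)=\sqrt{2/(\pi r)}\sinh r$, the case $\delta=0$ is $g_D$ from Example~\ref{Ex:gausshalfline}, which serves as a consistency check. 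I would also need the Chapman--Kolmogorov property and the approximate-identity property $\int_D g_\delta(\varepsilon,x,z)f(z)\,\d z\to f(x)$ as $\varepsilon\to0$ for continuous $f$ of controlled growth. The latter follows from $I_\nu(r)\sim e^r/\sqrt{2\pi r}$ as $r\to\infty$, so that $g_\delta(\varepsilon,x,\cdot)$ is asymptotically the Gaussian $g(\varepsilon,x-\cdot)$ near $x$, together with small-$r$ behaviour that is negligible away from $x$. The key asymptotics are $g_\delta(t,x,z)\approx c\,z^{1-\delta}$ and $\partial_z g_\delta(t,x,z)\approx c(1-\delta)z^{-\delta}$ as $z\to0$, from $I_\nu(r)\sim (r/2)^\nu/\Gamma(\nu+1)$, and Gaussian decay as $z\to\infty$.

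Next, for fixed $t,x,y$ and $0<s<t$, I would set $F(s)=\int_D g_\delta(s,x,z)g_\beta(t-s,z,y)\,\d z$. Differentiating under the integral and using the two heat equations gives
\[
F'(s)=\int_D\big[(L_\delta g_\delta(s,x,\cdot))(z)\,g_\beta(t-s,z,y)-g_\delta(s,x,z)\,(L_\beta g_\beta(t-s,\cdot,y))(z)\big]\,\d z .
\]
Integrating the second-order parts by parts leaves the potential difference $\kappa\int_D g_\delta(s,x,z)z^{-2}g_\beta(t-s,z,y)\,\d z$, plus the Wronskian boundary term $[\,\partial_z g_\delta\, g_\beta-g_\delta\,\partial_z g_\beta\,]_{0}^{\infty}$. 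At $\infty$ this term vanishes by Gaussian decay. At $0$ it behaves like $z^{-\delta}z^{1-\beta}+z^{1-\delta}z^{-\beta}=O(z^{1-\delta-\beta})$, which tends to $0$ exactly because $\delta+\beta<1$. The same condition makes $z^{-2}g_\delta g_\beta\sim z^{-\delta-\beta}$ integrable at $0$, so all integrals are finite. This matters because $\kappa$ may be negative, and the identity is then an equality of finite quantities rather than a Tonelli-type statement.

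Integrating $F'$ over $[\varepsilon,t-\varepsilon]$ gives $F(t-\varepsilon)-F(\varepsilon)=\kappa\int_\varepsilon^{t-\varepsilon}\!\int_D\cdots$. As $\varepsilon\to0$, the approximate-identity property gives $F(t-\varepsilon)\to g_\delta(t,x,y)$ and $F(\varepsilon)\to g_\beta(t,x,y)$. The right-hand side converges to the full integral by dominated (or monotone) convergence, since the integrand has constant sign. This yields the stated identity.

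The main obstacle is the justification step: differentiating $F$ under the integral sign and performing the integration by parts uniformly for $s$ in compact subintervals of $(0,t)$. This needs bounds on $g_\delta$ and $\partial_z g_\delta$, $\partial_z^2 g_\delta$ that are uniform in $s\in[\varepsilon,t-\varepsilon]$ near $z=0$ and $z=\infty$. I would obtain these from the two-regime bounds $I_\nu(r)\approx r^\nu$ for $r\le1$ and $I_\nu(r)\approx e^r r^{-1/2}$ for $r\ge1$, together with $I_\nu'=I_{\nu+1}+\frac{\nu}{r}I_\nu$. If this becomes messy, a fallback is to take Laplace transforms in $t$ and verify the resolvent identity $G^\lambda_\delta-G^\lambda_\beta=\kappa\,G^\lambda_\delta z^{-2}G^\lambda_\beta$. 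Here $G^\lambda_\nu(x,y)=\sqrt{xy}\,I_\nu(\sqrt\lambda(x\wedge y))K_\nu(\sqrt\lambda(x\vee y))$, and the verification is an ODE Wronskian computation with the same boundary condition $\delta+\beta<1$. One would then conclude by uniqueness of Laplace transforms, using continuity in $t$ of both sides.
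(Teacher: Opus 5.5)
Your argument is correct, but your main route is genuinely different from the paper's; your fallback is essentially the paper's proof.

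The paper works only on the resolvent side. After a Laplace transform in $t$ (via a tabulated integral), the claim becomes an identity between the Green functions $I_\nu(\lambda(x\wedge y))K_\nu(\lambda(x\vee y))$ and $I_\mu(\lambda(x\wedge y))K_\mu(\lambda(x\vee y))$, with $\nu=\frac12-\delta$ and $\mu=\frac12-\beta$. The $z$-integral is split over $(0,x)$, $(x,y)$, $(y,\infty)$, and each piece is evaluated in closed form. The tools are the cross-Wronskian identity
$\frac{d}{dz}\big[z\big(A_\nu'(\lambda z)B_\mu(\lambda z)-A_\nu(\lambda z)B_\mu'(\lambda z)\big)\big]=\frac{\nu^2-\mu^2}{\lambda z}A_\nu(\lambda z)B_\mu(\lambda z)$
and $I_\nu'K_\nu-I_\nu K_\nu'=1/z$. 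There the hypothesis $\delta+\beta<1$ enters only implicitly, in the integral over $(0,x)$. It is what makes $z(I_\nu'I_\mu-I_\nu I_\mu')(\lambda z)=O(z^{\nu+\mu})$ vanish at $z=0$, since $\nu+\mu=1-\delta-\beta$. This is exactly your Wronskian boundary term $O(z^{1-\delta-\beta})$.

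Your interpolation argument with $s\mapsto\int_D g_\delta(s,x,z)g_\beta(t-s,z,y)\,dz$ stays in the time domain. It needs neither closed-form resolvents nor Laplace inversion, and it would carry over to kernels without explicit Green functions. It also shows transparently why $\delta+\beta<1$ is the right condition.

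The price is the analytic bookkeeping you yourself flag. You need bounds on $g_\delta$, $\partial_z g_\delta$, $\partial_z^2 g_\delta$ near $z=0$ and $z=\infty$, uniform for $s$ in compact subintervals of $(0,t)$, to differentiate under the integral and integrate by parts. You also need the approximate-identity property, including $\int_D g_\delta(\varepsilon,x,z)\,dz\to1$ as $\varepsilon\to0$, which you would verify from the large-argument asymptotics of $I_\nu$. The paper trades all of this for an exact ODE computation plus continuity and uniqueness of Laplace transforms.

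Finally, the Chapman--Kolmogorov property you list is not actually used anywhere in your argument.
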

\begin{proof}
Let $\nu=1/2-\delta$ and $\mu=1/2-\beta$, and note that $\nu^2-\mu^2=-\kappa$.
By continuity, it suffices to verify the equality for the corresponding Laplace transforms with respect to $t$, which, by \cite[6.653]{MR3307944}, amounts to showing that for $\lambda>0$,
\begin{align}
\begin{aligned}\label{eq:Duh_halfline}
I_\nu(\lambda (x \land y)) & \, K_\nu(\lambda (x \vee y)) =  I_\mu(\lambda (x \land y)) \, K_\mu(\lambda (x \vee y)) \\
&+  \kappa \int_0^\infty \frac{1}{z} \, I_\nu(\lambda (x \land z)) \, K_\nu(\lambda (x \vee z)) \, I_\mu (\lambda (z \land y)) \, K_\mu(\lambda (z \vee y)) \, \d{z}.
\end{aligned}
\end{align}
Without loss of generality, assume that $x\leq y$; the case $x>y$ is analogous. We split the integral into three parts, $J_{(0,x)}$, $J_{(x,y)}$, and $J_{(y,\infty)}$, corresponding to the intervals $(0,x)$, $(x,y)$, and $(y,\infty)$, respectively. We use the identity
\[
\frac{\d}{\d z} \left[ z \left( A_{\nu}'(\lambda z) B_\mu(\lambda z) - A_{\nu}(\lambda z) B_\mu'(\lambda z) \right) \right] = \frac{\nu^2 - \mu^2}{\lambda z} \, A_\nu(\lambda z) B_\mu(\lambda z),
\]
where $A$ and $B$ stand for any of the functions $I$ or $K$. 
This follows by direct differentiation, using the fact that $A_\nu$ and $B_\mu$ satisfy the corresponding modified Bessel equations; see \cite[8.494]{MR3307944}.
Therefore,
\begin{align*}
J_{(0,x)} & = \kappa \int_0^x \frac{1}{z} \, I_\nu(\lambda (x \land z)) \, K_\nu(\lambda (x \vee z)) \, I_\mu(\lambda (z \land y)) \, K_\mu(\lambda (z \vee y)) \, \d{z} \\
 &= \kappa \, K_\nu(\lambda x) K_\mu(\lambda y) \int_0^x \frac{1}{z} I_\nu(\lambda z) I_\mu(\lambda z)  \,\d{z} \\
& = - \lambda x \, K_\nu(\lambda x) K_\mu(\lambda y) 
\left( I_\nu'(\lambda x) I_\mu(\lambda x) - I_\nu(\lambda x) I_\mu'(\lambda x) \right).
\end{align*}
The integrals over $(x,y)$ and $(y,\infty)$ are handled similarly:
\begin{align*}
 J_{(x,y)} &=  I_\nu(\lambda x) K_\mu(\lambda y) \\
		& \quad \times \left( - \lambda y \left[ K_\nu'(\lambda y) I_\mu(\lambda y) - K_\nu(\lambda y) I_\mu'(\lambda y) \right] + \lambda x \left[ K_\nu'(\lambda x) I_\mu(\lambda x) - K_\nu(\lambda x) I_\mu'(\lambda x) \right] \right) 
\end{align*}
and
\begin{align*}		
 J_{(y,\infty)} &= \lambda y \, I_\nu(\lambda x) I_\mu(\lambda y)  \left[ K_\nu'(\lambda  y) K_\mu(\lambda  y) - K_\nu(\lambda  y) K_\mu'(\lambda  y) \right].
\end{align*} 
Using the identity
$I_\nu'(z) K_\nu(z) - I_\nu(z) K_\nu'(z) = \frac{1}{z}$, 
which is the Wronskian identity for
$I_\nu$ and $K_\nu$; see \cite[8.477, 8.486 4. and 13.]{MR3307944}, we simplify most of the resulting terms and obtain \eqref{eq:Duh_halfline}. 
\end{proof}

\begin{corollary}\label{cor:1}
Let $g_D(t,x,y) = \frac{1}{\sqrt{4\pi t}} \left(e^{-\frac{(x-y)^2}{4t}}- e^{-\frac{(x+y)^2}{4t}}\right)$, $t>0$, $x,y\in D$, be the Dirichlet heat kernel of $\Delta$ on $D=(0,\infty)$,
and let $V(\d x) = \delta(1-\delta) x^{-2} \,\d x$ with  $\delta \in (0,1/2]$.  Then, $g_\delta$
is the perturbation series for $g_D$ and $V$.
\end{corollary}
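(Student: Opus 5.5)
We first identify $g_0$ with $g_D$, then get Duhamel's formula from Proposition~\ref{prop:1}, and finally show that the homogeneous part in the decomposition of Lemma~\ref{lem:omega4} vanishes.

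\emph{Identification and Duhamel's formula.} Since $I_{1/2}(r)=\sqrt{2/(\pi r)}\sinh r$, substituting into the definition of $g_\delta$ with $\delta=0$ gives $g_0=g_D$ after an elementary computation. Next we apply Proposition~\ref{prop:1} with $\beta=0$. This is admissible because $\delta+\beta=\delta\le 1/2<1$, and then $\kappa=\delta(1-\delta)$. Hence $g_\delta$ satisfies Duhamel's formula \eqref{eq:tp-Duh} for $p=g_D$ and $V(\d x)=\delta(1-\delta)x^{-2}\,\d x$. Since $g_\delta$ is finite everywhere, Lemma~\ref{lem:omega4} gives $g_\delta=\rho+\omega$. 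Here $\rho$ is the perturbation series, and $\omega\ge0$ solves the homogeneous equation \eqref{eq:omega-inv}. It remains to prove $\omega\equiv0$.

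\emph{Testing against $\Phi(y)=y^{\delta}$.} By Example~\ref{Ex:gausshalfline} with $\beta=1-\delta\in[1/2,1)$, $\Phi$ is the zero-resolvent for $g_D$ and $\phi(\d y)=\delta(1-\delta)y^{-2}\Phi(y)\,\d y=\Phi(y)V(\d y)$. Lemma~\ref{lem:A1} then shows that $\Phi$ satisfies Dynkin's formula for $g_D$ and $\phi$. I then want two identities. The first is $\int_D g_\delta(t,x,y)y^\delta\,\d y=x^\delta$. I would obtain it from the explicit Bessel form: after the substitution the integral reduces to a Weber-type integral $\int_0^\infty r^{\mu+1}e^{-ar^2}I_\mu(br)\,\d r$ with $\mu=1/2-\delta$, which evaluates in closed form. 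The second is $\int_D\rho(t,x,y)y^\delta\,\d y=x^\delta$. For this I would invoke the invariance criterion of Section~\ref{sec:D-Pos} (Corollary~\ref{cor:eq_ineq}), which requires the integrability condition \eqref{eq:eq_ineq}, namely finiteness of $\int_0^t\int_D\rho(s,x,y)\,y^{\delta-2}\,\d y\,\d s$.

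\emph{Main obstacle.} The hard step is verifying this integrability. I would bound $\rho\le g_\delta$ and use the small-argument behaviour $g_\delta(s,x,y)\lesssim (xy)^{1-\delta}s^{\delta-3/2}e^{-(x-y)^2/(8s)}$, together with Gaussian decay for large $xy/s$. Near $y=0$ the integrand then behaves like $y^{1-\delta}y^{\delta-2}=y^{-1}$, which is only borderline. So a cruder bound will not do, and I expect to need the identity $\int_D g_\delta\,y^\delta\,\d y=x^\delta$ differentiated in $t$. Concretely, $\int_0^t\int_D g_\delta\,\delta(1-\delta)y^{\delta-2}$ should equal the defect in Dynkin's formula for $g_\delta$ with a compensated potential, which is finite because $\Phi$ is harmonic for $\Delta+V$ away from $0$. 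If this route fails, the fallback is to run the same argument directly on Laplace transforms, using the $I_\nu K_\nu$ resolvent from the proof of Proposition~\ref{prop:1}.

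\emph{Conclusion.} Once both identities hold, we get $\int_D\omega(t,x,y)y^\delta\,\d y=0$. Since $\omega\ge0$, it follows that $\omega(t,x,\cdot)=0$ almost everywhere. Then \eqref{eq:omega-inv} yields $\omega\equiv0$ everywhere, because its right-hand side integrates $\omega$ against $V(\d z)\,\d s$. Hence $g_\delta=\rho$.
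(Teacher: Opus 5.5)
\textbf{Gap: the test function.} With $\Phi(y)=y^\delta$, both identities you need are false for $\delta\in(0,\frac12)$.

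\emph{The first identity fails.} The function $y^\delta$ is invariant for the non-minimal solution $g_{1-\delta}$, not for $g_\delta$. The difference $g_{1-\delta}-g_\delta$ is a positive multiple of $K_{\frac12-\delta}$; this uses only $I_{-\nu}-I_\nu=\frac{2}{\pi}\sin(\nu\pi)K_\nu$. Hence $\int_D g_\delta(t,x,y)\,y^\delta\,\d y<x^\delta$ strictly, as recorded in Remark~\ref{rem:3}. Your Weber reduction has the wrong exponent. With $\mu=\frac12-\delta$, the factor $\sqrt{y}$ in $g_\delta$ times $y^\delta$ gives $y^{1-\mu}$, not $y^{1+\mu}$. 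The closed form $\int_0^\infty r^{\mu+1}e^{-ar^2}I_\mu(br)\,\d r$ that you quote actually computes $\int_D g_\delta(t,x,y)\,y^{1-\delta}\,\d y=x^{1-\delta}$.

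\emph{The integrability fails too.} Condition \eqref{eq:eq_ineq} for $\rho$ with $\Phi(y)=y^\delta$ is not a borderline technicality. Under the bound $\rho\le g_\delta$, the integrand is $\asymp y^{-1}$ near $0$, and your estimate of $g_\delta$ is two-sided there, so this is non-integrable. The integral for $\rho$ itself must also be infinite. Otherwise Corollary~\ref{cor:eq_ineq} with $\phi=\Phi V$ would give $\int_D\rho(t,x,y)\,y^\delta\,\d y=x^\delta$, contradicting $\rho\le g_\delta$ and the strict inequality above. No compensation argument can rescue a divergent integral, so the plan fails as stated.

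\textbf{How to repair it.} Test with $\Phi(y)=y^{1-\delta}$ instead; this is Example~\ref{Ex:gausshalfline} with $\beta=\delta$, again with $\phi=\Phi V$. The first identity is then exactly your Weber integral. Condition \eqref{eq:eq_ineq} for $\rho\le g_\delta$ reduces near $y=0$ to $\int_0^1 y^{1-\delta}\,y^{-1-\delta}\,\d y=\int_0^1 y^{-2\delta}\,\d y<\infty$ for $\delta<\frac12$. The factor $e^{-x^2/(16s)}$ handles small $s$, and Gaussian bounds handle large $y$. Your concluding step, $\int_D\omega(t,x,y)\,y^{1-\delta}\,\d y=0\Rightarrow\omega\equiv0$, then goes through.

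This would be a genuine alternative to the paper's proof. The paper instead shows $\int_0^t\int_D g_\delta(s,x,z)\,g_\delta(t-s,z,y)\,V(\d z)\,\d s<\infty$ and applies dominated convergence in \eqref{eq:omega-repr}, using $p_N\le g_\delta$ and $p_N\to0$. Both routes rest on the same $z^{-2\delta}$ integrability near $0$.

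\textbf{The case $\delta=\frac12$ is missing.} There $y^\delta=y^{1-\delta}=y^{1/2}$, and the integrand is again $\asymp y^{-1}$ near $0$, so no direct argument of this type works. The paper passes to the limit. Since $V=\delta(1-\delta)x^{-2}\,\d x$ increases with $\delta\in(0,\frac12]$, each $p_n$, and hence $\rho$, increases to its value at $\delta=\frac12$ by monotone convergence. Meanwhile $g_\delta\to g_{1/2}$ pointwise, which gives $\rho=g_{1/2}$.
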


\begin{proof}
By Proposition~\ref{prop:1},
$g_\delta$ satisfies Duhamel's formula for $g_D=g_0$ and $V$.
First, let
$\delta < 1/2$.
From Lemma \ref{lem:omega4} we have $g_\delta = \rho + \omega$. Moreover,
$p_N \to 0$ as $N\to\infty$,
$p_N \leq g_\delta$
and $\omega$ satisfies \eqref{eq:omega-repr}.
We verify that, for fixed $t>0$, $x,y\in D$,
\[
\int_0^t \int_0^\infty g_\delta (s,x,z) g_\delta (t-s,z,y) V(\d z) \d s < \infty
\]
which allows us to apply the dominated convergence theorem and conclude that $\omega=0$.
Let $\varepsilon = (x \land y)/2$.  By the estimates \cite[10.30.4]{DLMF},
\[
g_\delta(u,x,z) \leq c (4 \pi u)^{-1/2}e^{-\frac{(x-z)^2}{4u}},
\qquad u\in (0,t],\; z\in [\varepsilon,\infty).
\]
The analogous estimate holds with $y$ in place of $x$.
Thus,
\[
\int_0^t \int_\varepsilon^\infty g_\delta(s,x,z) g_\delta(t-s,z,y) V(\d z) \d s 
\leq c t V(\varepsilon)  (4\pi t)^{-1/2} e^{-\frac{(x-y)^2}{4t}} <\infty.
\]
Using
\cite[10.30.1 and 10.30.4]{DLMF}, we get $I_\nu(z)\leq c z^\nu e^z$ and
\[
g_\delta(u,x,z) 
	\leq c \left( \frac{xz}{u} \right) ^{1-\delta}  (4 \pi u)^{-1/2}e^{-\frac{(x-z)^2}{4u}}
	\leq c \left( \frac{xz}{u} \right) ^{1-\delta} u^{-1/2}e^{-\frac{x^2}{16 u}},
\qquad u\in (0,t],\; z\in (0,\varepsilon].
\]
Therefore,
\[
\int_0^t \int_0^\varepsilon g_\delta(s,x,z) g_\delta(t-s,z,y) V(\d z) \d s 
	\leq c
\int_0^t
 (xy)^{1-\delta}  s^{\delta-\frac{3}{2}}(t-s)^{\delta-\frac{3}{2}} e^{ -\frac{x^2}{16s}-\frac{y^2}{16(t-s)}}  \int_0^\varepsilon z^{-2\delta}\,\d{z} \d{s}<\infty.
\]
This completes the proof for $\delta<1/2$.
Since the iterated kernels appearing in the perturbation series are non-negative, and $g_\delta$ depends continuously on $\delta \in (0,1/2]$, the identity $g_\delta = \rho$ extends to the critical case $\delta = 1/2$ by the monotone convergence theorem.
\end{proof}

\begin{corollary}\label{cor:2}
In the setting of Corollary~\ref{cor:1}, for
every $\delta\in (0,1/2)$,
we have that $g_{1-\delta}=g_{\delta}+\omega$, where
\begin{align*}
\omega(t,x,y) 
&= 
\sin((1/2-\delta) \pi)
\frac{\sqrt{xy}}{\pi t} K_{\frac12-\delta}\left( \frac{xy}{2t} \right)  \exp\left(-\frac{x^2+y^2}{4t}\right)\\
&= \lim_{N\to \infty} \int_0^t \int_D g_{1-\delta} (s,x,z) p_N(t-s,z,y) \,V(\d z)\d s,\qquad \quad t>0,\; x,y\in D,
\end{align*}
satisfies \eqref{eq:omega-inv} with $p=g_D$ and $V$.
\end{corollary}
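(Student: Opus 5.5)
We obtain $g_{1-\delta}$ as a non-negative finite solution of Duhamel's equation for $g_D=g_0$ and $V$, and then read off $\omega$ from Lemma~\ref{lem:omega4} together with Corollary~\ref{cor:1}.

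\emph{Step 1: Duhamel's formula.} Apply Proposition~\ref{prop:1} with $\delta$ replaced by $1-\delta$ and with $\beta=0$. For $\delta\in(0,1/2)$ we have $1-\delta\in(1/2,1)\subset(-\infty,1)$ and $(1-\delta)+0<1$, so the proposition applies. The constant is $\kappa=(1-\delta)\delta-0=\delta(1-\delta)$. Hence $g_{1-\delta}$ satisfies Duhamel's formula for $g_D$ and the same $V$. The kernel $g_{1-\delta}$ is finite and non-negative, since $I_{\delta-1/2}>0$ for real order and positive argument.

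\emph{Step 2: decomposition.} Lemma~\ref{lem:omega4} gives $g_{1-\delta}=\rho+\omega$, where $\omega$ satisfies \eqref{eq:omega-inv} and is given by the limit representation \eqref{eq:omega-repr}. This is exactly the second expression for $\omega$ in the statement. By Corollary~\ref{cor:1}, $\rho=g_\delta$. Therefore $\omega=g_{1-\delta}-g_\delta$, and it is automatically non-negative and a solution of \eqref{eq:omega-inv}.

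\emph{Step 3: the closed form.} It remains to compute $g_{1-\delta}-g_\delta$. Put $\nu=\frac12-\delta\in(0,\frac12)$. Then $g_{1-\delta}$ involves $I_{-\nu}$ and $g_\delta$ involves $I_\nu$, with the same prefactor $\frac{\sqrt{xy}}{2t}\exp(-\frac{x^2+y^2}{4t})$. The standard connection formula is
\[
K_\nu(z)=\frac{\pi}{2}\,\frac{I_{-\nu}(z)-I_\nu(z)}{\sin(\nu\pi)}.
\]
It gives $I_{-\nu}(z)-I_\nu(z)=\frac{2}{\pi}\sin(\nu\pi)K_\nu(z)$. Multiplying by the prefactor yields
\[
\sin\left(\left(\tfrac12-\delta\right)\pi\right)\frac{\sqrt{xy}}{\pi t}\,K_{\frac12-\delta}\left(\frac{xy}{2t}\right)\exp\left(-\frac{x^2+y^2}{4t}\right),
\]
which is the asserted formula.

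The main potential obstacle is checking that Proposition~\ref{prop:1} and Lemma~\ref{lem:omega4} apply as stated. Proposition~\ref{prop:1} needs a parameter $1-\delta$ with negative index $\frac12-(1-\delta)=-\nu$ for $I$; its Laplace-transform proof still works because of the condition $\delta+\beta<1$. Lemma~\ref{lem:omega4} only needs finiteness, which holds. Everything else is the Bessel identity.
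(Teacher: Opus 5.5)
Your proposal is correct and follows the paper's proof exactly. Proposition~\ref{prop:1} with parameters $(1-\delta,0)$ gives Duhamel's formula for $g_{1-\delta}$. Lemma~\ref{lem:omega4} and Corollary~\ref{cor:1} then give $\rho=g_\delta$ and $\omega=g_{1-\delta}-g_\delta$, together with the limit representation, and the connection formula $I_{-\nu}-I_\nu=\frac{2}{\pi}\sin(\nu\pi)K_\nu$ yields the closed form. One small correction: $I_{\delta-1/2}>0$ holds because its order lies in $(-1/2,0)$, which is above $-1$; it is not true for arbitrary real order.
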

\begin{proof}
By Proposition~\ref{prop:1},
$g_{1-\delta}$ satisfies Duhamel's formula for $g_D$ and $V$.
Thus,  Lemma~\ref{lem:omega4}
guarantees that $g_{1-\delta}=\rho+\omega$,
where $\rho$ is the perturbation series, and $\omega$ satisfies \eqref{eq:omega-inv} and \eqref{eq:omega-repr} for the corresponding objects.
According to Corollary~\ref{cor:1},  $\rho=g_\delta$.
We get $\omega=g_{1-\delta}-g_\delta$ and it suffices to express
$I_{\delta-\frac12} -I_{\frac12-\delta}
$
using $K_{\frac12-\delta}$, see \cite[8.485]{MR3307944}.
\end{proof}

We provide a few observations.

\begin{remark}
Recall that the perturbation series satisfies Duhamel's formula (see Lemma~\ref{lem:bp-Duhamel}). However, in general, Duhamel's formula does not imply a perturbation series representation (see Lemma~\ref{lem:D+w=D} and Examples~\ref{ex:6}--\ref{ex:9}), even when restricted to transition densities
(see Proposition~\ref{prop:1} and Corollary~\ref{cor:2}).
\end{remark}

\begin{remark}\label{rem:4}
Let us observe that in Example~\ref{ex:6}, $\psi(t,y)= \lim_{|x|\to 0} t^\delta |y|^{-\delta} g(t,x,y)$, while in Example~\ref{ex:7}, $\psi(t,y) = \lim_{x\to 0^+} t^\delta y^{-\delta} g_D(t,x,y)/x$. However, we regard these representations as specific to the examples above rather than as a general rule. 
We believe that the approach described in Remark~\ref{rem:psi-inv}, based on dividing $\tp$ by a suitably chosen function, may provide a more general way of obtaining functions $\psi$. In Example~\ref{ex:7}, this can be verified by direct computation:
$\psi(t,y)= c_\delta \lim_{x\to 0^+} g_\delta(t,x,y)/x^{1-\delta}$.
A similar representation is expected to hold in Example~\ref{ex:6}. This viewpoint is further supported by Example~\ref{ex:8}, where $\psi$ is defined precisely by such a procedure.
\end{remark}

\section{Positive perturbations}\label{sec:D-Pos}

\subsection{Via Duhamel's formula}\label{subsec:D-Pos}

\begin{theorem}\label{prop:tp-init}
Assume that $\Phi$ satisfies Dynkin's formula for $p$ and $\phi$, and that  
$\tp$ satisfies Duhamel's formula for $p$ and $V$.
Then,
for all $t>0$, $x\in X$,
\[
\int_X \tp(t,x,y)\Phi(y)\, m(\d y) + \int_0^t \int_X \tp(s,x,y)\, \phi( \d y) \d s = 
\Phi(x)+ \int_0^t \int_X \tp(s,x,y)  \Phi(y)\, V(\d y) \d s \,.
\]
\end{theorem}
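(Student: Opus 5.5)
The plan is to substitute Duhamel's formula \eqref{eq:tp-Duh} into $\int_X \tp(t,x,y)\Phi(y)\,m(\d y)$ and then apply Dynkin's formula \eqref{eq:init} for $p$ inside the resulting double integral. Everything is non-negative, and the identity is stated in $[0,\infty]$ with both sides written as sums. No subtraction is needed, so Tonelli's theorem suffices throughout.

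\emph{First step.} Multiply \eqref{eq:tp-Duh} by $\Phi(y)$ and integrate in $m(\d y)$. This gives
\begin{align*}
\int_X \tp(t,x,y)\Phi(y)\,m(\d y)
&= \int_X p(t,x,y)\Phi(y)\,m(\d y)\\
&\quad+ \int_0^t\int_X \tp(s,x,z)\left(\int_X p(t-s,z,y)\Phi(y)\,m(\d y)\right)V(\d z)\,\d s .
\end{align*}
By Dynkin's formula with time $t-s$ and starting point $z$, the inner integral equals $\Phi(z)-\int_0^{t-s}\int_X p(u,z,y)\,\phi(\d y)\,\d u$. We will not subtract, so as to avoid $\infty-\infty$. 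Instead we add the relevant term $\int_0^t\int_X\tp(s,x,y)\,\phi(\d y)\,\d s$ to both sides and handle it through Duhamel's formula.

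\emph{Second step.} Expand the term $\int_0^t\int_X \tp(s,x,y)\,\phi(\d y)\,\d s$ using \eqref{eq:tp-Duh} for $\tp(s,x,y)$. This gives $\int_0^t\int_X p(s,x,y)\,\phi(\d y)\,\d s$ plus
\[
\int_0^t\int_0^s\int_X\int_X \tp(r,x,z)p(s-r,z,y)\,V(\d z)\,\d r\,\phi(\d y)\,\d s .
\]
By Tonelli, with $u=s-r$, this last term equals
\[
\int_0^t\int_X \tp(r,x,z)\left(\int_0^{t-r}\int_X p(u,z,y)\,\phi(\d y)\,\d u\right)V(\d z)\,\d r .
\]
Adding the expressions from the two steps, we group the $p$-parts and the $\tp$-parts.

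\emph{Conclusion.} The $p$-parts are $\int_X p(t,x,y)\Phi(y)\,m(\d y)+\int_0^t\int_X p(s,x,y)\,\phi(\d y)\,\d s$, which equals $\Phi(x)$ by \eqref{eq:init}. In the $\tp$-parts, the two inner brackets (at the same $r$ and $z$) combine to
\[
\int_X p(t-r,z,y)\Phi(y)\,m(\d y)+\int_0^{t-r}\int_X p(u,z,y)\,\phi(\d y)\,\d u=\Phi(z),
\]
again by \eqref{eq:init}. The combination is legitimate because we add two non-negative integrands against the same measure $\tp(r,x,z)V(\d z)\,\d r$. This yields $\int_0^t\int_X\tp(r,x,z)\Phi(z)\,V(\d z)\,\d r$, which is exactly the claimed identity.

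The main obstacle is bookkeeping, not analysis. One must never subtract possibly infinite quantities, and both sides of the target identity must be kept as sums of non-negative terms. One must also justify each interchange via Tonelli for the s-finite measures $m$, $V$, $\phi$ and Lebesgue measure, including the change of variables on the simplex $\{0<r<s<t\}$. With this arrangement, no finiteness assumption is required, consistent with the statement having none.
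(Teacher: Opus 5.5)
Your proposal is correct and is essentially the paper's proof: you expand both $\int_X\tp(t,x,y)\Phi(y)\,m(\d y)$ and $\int_0^t\int_X\tp(s,x,y)\,\phi(\d y)\,\d s$ via Duhamel's formula, interchange the integrals by Tonelli, add the results, and apply Dynkin's formula twice (at $(t,x)$ and at $(t-r,z)$). Like the paper, you keep everything additive and never subtract, which is why no finiteness hypothesis is needed.
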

\begin{proof}
By Duhamel’s formula \eqref{eq:tp-Duh},
\begin{align*}
\int_X \tp(t,x,y)\Phi(y)\, m(\d y) 
&= \int_X p(t,x,y)\Phi(y) \, m(\d y) \\
&\quad + \int_0^t \int_X \tp(u,x,z) \int_X p(t-u,z,y) \Phi(y) \, m(\d y)\, V(\d z) \d u 
\end{align*}
and
\begin{align*}
\int_0^t \int_X \tp(s,x,y)\, \phi(\d y) \d s
&= \int_0^t \int_X p(s,x,y) \, \phi(\d y) \d s\\
& + \int_0^t \int_X \left(  \int_0^s \int_X \tp(u,x,z) p(s-u,z,y)\, V(\d z) \d u \right) \, \phi(\d y) \d s\\
&= \int_0^t \int_X p(s,x,y)\phi(\d y) \d s\\
& + \int_0^t \int_X  \tp(u,x,z) \left(\int_0^{t-u} \int_X p(r,z,y)\, \phi(\d y) \d r \right) V(\d z) \d u.
\end{align*}
It suffices to add both sides and apply Dynkin's formula \eqref{eq:init} twice.
\end{proof}

\begin{remark}
If we replace ''$=$'' by ''$\leq$'' in Dynkin's formula  \eqref{eq:init}, then the statement of  Theorem~\ref{prop:tp-init} remains valid with ''$\leq$'' in place of ''$=$''. The proof proceeds along similar lines.
\end{remark}

Corollary~\ref{cor:eq_ineq} is the key ingredient for future applications and will also be used in the proof of Theorem~\ref{thm:super_median}. To the best of our knowledge, the first result of this kind was established in \cite[Theorem~3.1]{MR3933622} for the fractional Laplacian with the Hardy potential.

\begin{corollary}\label{cor:eq_ineq}
Assume the hypotheses of Theorem~\ref{prop:tp-init}. Fix $t>0$ and $x\in X$. If
\begin{align}\label{eq:eq_ineq}
    \int_0^t \int_X \tp(s,x,y)\Phi(y)\,V(\d y)\,\d s<\infty,
\end{align}
then
\begin{align}\label{eq:3}
\int_X \tp(t,x,y)\Phi(y)\,m(\d y)
+\int_0^t\int_X \tp(s,x,y)\big[\phi(\d y)-\Phi(y)V(\d y)\big]\,\d s
=\Phi(x).
\end{align}
In particular,
\begin{align*}
\phi(\d y)\ge \Phi(y)V(\d y)
&\quad\Longrightarrow\quad
\int_X \tp(t,x,y)\Phi(y)\,m(\d y)\le \Phi(x),\\
\phi(\d y)= \Phi(y)V(\d y)
&\quad\Longrightarrow\quad
\int_X \tp(t,x,y)\Phi(y)\,m(\d y)= \Phi(x).
\end{align*}
\end{corollary}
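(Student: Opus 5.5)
The plan is to subtract the finite quantity \eqref{eq:eq_ineq} from both sides of the identity in Theorem~\ref{prop:tp-init}. Fix $t>0$ and $x\in X$ and write
\[
A=\int_X \tp(t,x,y)\Phi(y)\,m(\d y),\qquad B=\int_0^t\int_X\tp(s,x,y)\,\phi(\d y)\,\d s,\qquad C=\int_0^t\int_X\tp(s,x,y)\Phi(y)\,V(\d y)\,\d s .
\]
Theorem~\ref{prop:tp-init} gives $A+B=\Phi(x)+C$ in $[0,\infty]$, and by assumption $C<\infty$.

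The only delicate point is arithmetic with possibly infinite values. If $\Phi(x)<\infty$, then $A+B<\infty$, so $A$ and $B$ are both finite. Subtracting $C$ gives $A+(B-C)=\Phi(x)$. By the abbreviation for differences of integrals against s-finite measures (legitimate since $C<\infty$), $B-C$ is exactly the middle term of \eqref{eq:3}. The measure $\Phi(y)V(\d y)$ is s-finite by the earlier remark, so this abbreviation applies. If $\Phi(x)=\infty$, then $A+B=\infty$. Since $C<\infty$, we have $A+(B-C)=\infty=\Phi(x)$, with $B-C$ well defined because one of $B$, $C$ is finite. Hence \eqref{eq:3} holds in both cases, read in $(-\infty,\infty]$.

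For the two implications, observe first that if $\phi(\d y)\ge\Phi(y)V(\d y)$ as measures, then integrating the non-negative function $\tp(s,x,\cdot)$ and then over $s\in(0,t)$ gives $B\ge C$. So the middle term of \eqref{eq:3} is non-negative, and \eqref{eq:3} yields $A\le\Phi(x)$. This inequality is also trivial when $\Phi(x)=\infty$.

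If instead $\phi=\Phi V$, then $B=C<\infty$, the middle term vanishes, and $A=\Phi(x)$.

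I do not expect any real obstacle. The only care needed is to check that every subtraction involves a finite quantity, which is exactly what \eqref{eq:eq_ineq} guarantees.
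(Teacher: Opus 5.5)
Your proposal is correct and is the argument the paper intends: it states the corollary without a separate proof, as the immediate consequence of subtracting the finite quantity \eqref{eq:eq_ineq} from both sides of the identity in Theorem~\ref{prop:tp-init}. Your extra bookkeeping for the case $\Phi(x)=\infty$ and for the s-finiteness of $\Phi(y)V(\d y)$ is sound; it simply makes explicit what the paper leaves implicit.
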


The significance of the condition \eqref{eq:eq_ineq} is discussed in Remark~\ref{rem:3}. We shall view \eqref{eq:3} as an extended version of Dynkin's formula. Equation~\eqref{eq:i-2} already indicates that the sign of
$
\phi(\d y)-\Phi(y)V(\d y)
$
may depend on the parameter~$\beta$. In full generality, the situation is even more delicate, since this expression need not have a fixed sign.

\subsection{Via perturbation series}\label{sec:PS-P}

In this section we consider the perturbation series $\rho$ as defined by \eqref{eq:pertser}
and \eqref{eq:pn}.
We first prove a version of Theorem~\ref{prop:tp-init} for $p_n$, cf. 
\cite[Lemma~3.6]{MR3933622}.

\begin{lemma}\label{lem:p_n}
Assume that $\Phi$ satisfies Dynkin's formula for $p$ and $\phi$. Then,
for all $t>0$, $x\in X$ and $n\in\mathbb{N}\cup \{0\}$, 
\[
\int_X p_{n+1}(t,x,y) \Phi(y)\, m(\d y)
+
\int_0^t\int_X p_{n+1}(s,x,y)\, \phi(\d y) \d s =
\int_0^t \int_X p_n(s,x,y) \Phi(y) \, V(\d y) \d s .
\]
\end{lemma}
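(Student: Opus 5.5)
The plan is to mimic the proof of Theorem~\ref{prop:tp-init}, but with $\tp$ replaced by $p_{n+1}$ and using the recursive definition \eqref{eq:pn} in place of Duhamel's formula. The key point is that $p_{n+1}$ is built from $p_n$ and $p$ with $p$ appearing on the right, so Dynkin's formula \eqref{eq:init} can be applied directly to the inner $p$. All integrands are non-negative, so Tonelli's theorem justifies every interchange. No finiteness is needed, because we never subtract: we only add the two expressions and recognise Dynkin's formula.

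\textbf{Step 1.} Insert \eqref{eq:pn} into the first term on the left and interchange integrals:
\[
\int_X p_{n+1}(t,x,y)\Phi(y)\,m(\d y)=\int_0^t\int_X p_n(u,x,z)\left(\int_X p(t-u,z,y)\Phi(y)\,m(\d y)\right)V(\d z)\,\d u .
\]

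\textbf{Step 2.} Insert \eqref{eq:pn} into the second term. Then exchange the order of integration over $\{0<u<s<t\}$ and substitute $r=s-u$:
\[
\int_0^t\int_X p_{n+1}(s,x,y)\,\phi(\d y)\,\d s=\int_0^t\int_X p_n(u,x,z)\left(\int_0^{t-u}\int_X p(r,z,y)\,\phi(\d y)\,\d r\right)V(\d z)\,\d u .
\]

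\textbf{Step 3.} Add the two identities. The integrands combine under a single integral $\int_0^t\int_X p_n(u,x,z)[\dots]\,V(\d z)\,\d u$, and the bracket is exactly the left-hand side of \eqref{eq:init} evaluated at time $t-u>0$ and point $z$. This requires $u<t$, which holds for a.e.\ $u$ in the range of integration. By Dynkin's formula the bracket equals $\Phi(z)$, which gives the claimed right-hand side.

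Addition is legitimate in $[0,\infty]$ under the stated conventions. The only mildly delicate point is the time-variable bookkeeping in Step~2, namely checking that the region $0<u<s<t$ maps to $0<u<t$, $0<r<t-u$. I expect no real obstacle, since the argument is the $\tp$ computation of Theorem~\ref{prop:tp-init} with the leading $p$ term removed.
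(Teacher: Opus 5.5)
Your proposal is correct and is essentially the paper's own proof: substitute the recursion \eqref{eq:pn} for $p_{n+1}$ in both terms, use Tonelli's theorem with the change of variables $r=s-u$ in the time integral, add the two identities, and recognise the left-hand side of Dynkin's formula \eqref{eq:init} at time $t-u$ and point $z$. Your remark that no finiteness assumption is needed, because nothing is subtracted, is also accurate.
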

\begin{proof}
By the definition \eqref{eq:pn} of $p_{n+1}$,
\begin{align*}
\int_X p_{n+1}(t,x,y) \Phi(y) \, m(\d y)
=\int_0^t\int_X p_n(u,x,z) \int_X  p(t-u,z,y) \Phi(y) \, m(\d y) V(\d z)\d u
\end{align*}
and
\begin{align*}
\int_0^t &\int_X p_{n+1} (s,x,y)\, \phi(\d y) \d s \\
&= \int_0^t \left( \int_X \int_0^s \int_X p_n(u,x,z) p(s-u,z,y)\, V(\d z) \d u \right) \, \phi(\d y) \d s \\
&= \int_0^t \int_X p_n (u,x,z)\left( \int_0^{t-u} \int_X p(r,z,y)\, \phi(\d y) \d r \right) V(\d z) \d u.
\end{align*}
It suffices to add both sides and apply Dynkin's formula \eqref{eq:init}.
\end{proof}

We provide a useful general condition on $V$ which makes Corollary~\ref{cor:eq_ineq} widely applicable.

\begin{corollary}\label{cor:finite_int}
Assume that $\Phi$ satisfies Dynkin's formula  for $p$ and $\phi$.
Suppose that $\Phi(x)V(\d x) \leq (1-\varepsilon)\phi(\d x)$ for some $\varepsilon\in(0,1]$. Then, for all $x\in X$ and $n\in\mathbb{N}\cup \{0\}$,
\[
\int_0^\infty \int_X p_n(s,x,y) \, \phi(\d y) \d s \leq (1-\varepsilon)^n \Phi(x)
\]
and
\[
\int_0^\infty \int_X \rho(s,x,y)\,\phi(\d y) \d s \leq \frac1\varepsilon \Phi(x) .
\]
\end{corollary}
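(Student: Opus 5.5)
The plan is to prove the first inequality by induction on $n$, using Lemma~\ref{lem:p_n} with $t\to\infty$. Then the second inequality follows by summing a geometric series, with Tonelli's theorem justifying the exchange of sum and integrals.

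\emph{Base case $n=0$.} Here $p_0=p$. Dynkin's formula \eqref{eq:init} gives
\[
\int_0^t\int_X p(s,x,y)\,\phi(\d y)\,\d s\le\Phi(x)
\]
for every $t>0$, since the first term there is non-negative. Letting $t\to\infty$ by monotone convergence yields $\int_0^\infty\int_X p_0(s,x,y)\,\phi(\d y)\,\d s\le \Phi(x)$.

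\emph{Inductive step.} Suppose the bound holds for $n$. By Lemma~\ref{lem:p_n}, dropping the non-negative term $\int_X p_{n+1}(t,x,y)\Phi(y)\,m(\d y)$, we get for every $t>0$
\[
\int_0^t\int_X p_{n+1}(s,x,y)\,\phi(\d y)\,\d s\le \int_0^t\int_X p_n(s,x,y)\Phi(y)\,V(\d y)\,\d s .
\]
The hypothesis $\Phi(y)V(\d y)\le(1-\varepsilon)\phi(\d y)$ is an inequality of measures, so integrating the non-negative function $p_n(s,x,\cdot)$ against it preserves the inequality. Hence the right-hand side is at most
\[
(1-\varepsilon)\int_0^\infty\int_X p_n(s,x,y)\,\phi(\d y)\,\d s\le(1-\varepsilon)^{n+1}\Phi(x).
\]
Letting $t\to\infty$ on the left by monotone convergence completes the induction. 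The extended arithmetic conventions cover the case $\Phi(x)=\infty$, where the claim is trivial, and also the case $\varepsilon=1$, where $V$ vanishes on $\{\Phi>0\}$ and the bound reads $0$ for $n\ge1$.

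\emph{Summation.} Since all $p_n\ge0$, Tonelli's theorem lets us write
\[
\int_0^\infty\int_X \rho(s,x,y)\,\phi(\d y)\,\d s=\sum_{n=0}^\infty\int_0^\infty\int_X p_n(s,x,y)\,\phi(\d y)\,\d s ,
\]
and this is at most $\sum_{n\ge0}(1-\varepsilon)^n\Phi(x)=\Phi(x)/\varepsilon$. There is no serious obstacle. The only point requiring care is the measure-inequality step: the pointwise inequality between measures must be read as $\int g\,\Phi\,\d V\le(1-\varepsilon)\int g\,\d\phi$ for every measurable $g\ge0$. This follows from the inequality on sets by approximating $g$ with simple functions.
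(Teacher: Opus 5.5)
Your proposal is correct and follows the paper's proof: induction on $n$ via Lemma~\ref{lem:p_n} and the measure inequality $\Phi V\le(1-\varepsilon)\phi$, then summing the geometric series. Working at finite $t$ and passing to $t\to\infty$ by monotone convergence only makes explicit a step the paper leaves implicit, and one remark applies: when $\Phi(x)=\infty$ and $\varepsilon=1$, the bound for $n\ge1$ is $0\cdot\infty=0$, so that case is not actually trivial, but your inductive chain already covers it under the convention $0\cdot\infty=0$.
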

\begin{proof}
For $n=0$ the inequality is true by Dynkin's formula \eqref{eq:init}.
Assume that it holds for $n\in\mathbb{N}\cup \{0\}$. Then, by Lemma~\ref{lem:p_n},
\begin{align*}
\int_0^\infty \int_X p_{n+1}(s,x,y)\, \phi(\d y) \d s
&\leq \int_0^\infty \int_X p_n(s,x,y)\Phi(y)\, V(\d y) \d s \\
&\leq (1-\varepsilon) \int_0^\infty \int_X p_n(s,x,y) \, \phi(\d y) \d s
\leq (1-\varepsilon)^{n+1} \Phi(x).
\end{align*}
Thus, the first inequality follows by induction. The second inequality follows by summing the first inequality over all $n$.
\end{proof}

We show that if $\Phi$ satisfies Dynkin's formula for the measure $\phi$, then it is  super-median for the perturbation series associated with the potential $V$ such that $\Phi V \leq \phi$, see Remark~\ref{rem:2}.

\begin{theorem}\label{thm:super_median}
Assume that $\Phi$ satisfies Dynkin's formula  for $p$ and $\phi$, and $V$ is such that
\[
\Phi(x) V(\d x)\leq \phi(\d x).
\]
If $\rho$ is the perturbation series for $p$ and $V$, then
$\Phi$ is super-median for $\rho$, i.e.,
\[
\int_X \rho(t,x,y)\Phi(y)\, m(\d y) \leq \Phi(x), 
\quad
t>0,\; x\in X.
\]
\end{theorem}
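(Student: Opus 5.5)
The plan is to truncate the perturbation series and use Lemma~\ref{lem:p_n}, which avoids any integrability condition such as \eqref{eq:eq_ineq}. For $N\in\mathbb{N}\cup\{0\}$ set $\rho_N=\sum_{n=0}^N p_n$. By monotone convergence, $\int_X\rho(t,x,y)\Phi(y)\,m(\d y)=\lim_{N\to\infty}\int_X\rho_N(t,x,y)\Phi(y)\,m(\d y)$. It therefore suffices to show, for every $N$, $t>0$ and $x\in X$, the inequality
\begin{align*}
\int_X \rho_N(t,x,y)\Phi(y)\,m(\d y)+\int_0^t\int_X p_N(s,x,y)\,\phi(\d y)\,\d s\le \Phi(x),
\end{align*}
and then discard the non-negative second term.

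We prove this by induction on $N$. For $N=0$ it is exactly Dynkin's formula \eqref{eq:init}, with equality. For the inductive step, add the identity of Lemma~\ref{lem:p_n} with $n=N$ to the inequality for $N$. This gives
\begin{align*}
\int_X \rho_{N+1}(t,x,y)\Phi(y)\,m(\d y)+\int_0^t\int_X p_{N+1}(s,x,y)\,\phi(\d y)\,\d s
+\int_0^t\int_X p_N(s,x,y)\,\phi(\d y)\,\d s\\
\le \Phi(x)+\int_0^t\int_X p_N(s,x,y)\Phi(y)\,V(\d y)\,\d s .
\end{align*}
By the hypothesis $\Phi V\le\phi$, the last term on the right is at most the third term on the left, and the two should cancel.

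The main obstacle is this cancellation, because both sides may be infinite. To avoid subtracting $\infty-\infty$, we do not add the two relations blindly. If $\int_0^t\int_X p_N(s,x,y)\,\phi(\d y)\,\d s=\infty$, then the inductive hypothesis forces $\Phi(x)=\infty$, and the claim is trivial. Otherwise this quantity is finite, hence so is $\int_0^t\int_X p_N\Phi\,V\,\d s$, and we chain the relations directly:
\begin{align*}
\int_X \rho_{N+1}\Phi\,m+\int_0^t\int_X p_{N+1}\,\phi\,\d s
&=\int_X \rho_N\Phi\,m+\int_0^t\int_X p_N\Phi\,V\,\d s\\
&\le \int_X \rho_N\Phi\,m+\int_0^t\int_X p_N\,\phi\,\d s\le\Phi(x).
\end{align*}
The first equality is Lemma~\ref{lem:p_n}, the middle inequality is the hypothesis $\Phi V\le\phi$, and the last is the inductive hypothesis. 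Everything involves only additions of non-negative extended reals, so the conventions fixed earlier suffice. Letting $N\to\infty$ then gives the super-median property for $\rho$.
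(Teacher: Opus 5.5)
Your proof is correct, but it takes a different route from the paper's.

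The paper does not truncate the series. Instead it:
\begin{enumerate}
\item damps the potential to $V_\varepsilon=(1-\varepsilon)V$;
\item uses Corollary~\ref{cor:finite_int}, an induction on Lemma~\ref{lem:p_n} giving the geometric decay $\int_0^\infty\int_X p_n\,\phi\,\d s\le(1-\varepsilon)^n\Phi(x)$, to make $\int_0^\infty\int_X\rho_{V_\varepsilon}\,\phi\,\d s$ finite, which verifies the integrability condition \eqref{eq:eq_ineq};
\item invokes Corollary~\ref{cor:eq_ineq}, and hence Theorem~\ref{prop:tp-init} together with Duhamel's formula for $\rho_{V_\varepsilon}$ from Lemma~\ref{lem:bp-Duhamel};
\item lets $\varepsilon\downarrow0$ by monotone convergence.
\end{enumerate}

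Your telescoping induction on the partial sums $\rho_N$ uses only Lemma~\ref{lem:p_n}, the domination $\Phi V\le\phi$, and monotonicity of addition in $[0,\infty]$. This buys you several things:
\begin{itemize}
\item it bypasses the damping, Duhamel's formula and any finiteness condition;
\item it gives a slightly stronger truncated bound that keeps the remainder $\int_0^t\int_X p_N\,\phi\,\d s$;
\item it shows directly that the conclusion rests on the series structure, not on Duhamel's equation alone, which is consistent with the failure for $g_{1-\delta}$ in Remark~\ref{rem:3}.
\end{itemize}
The paper's route, in exchange, showcases Corollary~\ref{cor:eq_ineq} as the general tool. It also yields the quantitative by-product $\int_0^\infty\int_X\rho_{V_\varepsilon}\,\phi\,\d s\le\Phi(x)/\varepsilon$.

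One small remark: your case split on whether $\int_0^t\int_X p_N\,\phi\,\d s$ is finite is unnecessary. Your chain never subtracts anything, and $a+c\le a+d$ for $c\le d$ holds in $[0,\infty]$ without any finiteness.
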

\begin{proof}
If $\Phi(x)=\infty$, then the inequality holds trivially. Suppose that $\Phi(x)<\infty$.
We denote by $\rho_{V_\varepsilon}$ the  perturbation series for $p$ and $V_\varepsilon=(1-\varepsilon)V$. Then, by Corollary~\ref{cor:finite_int},
\[
\int_0^\infty \int_X \rho_{V_\varepsilon}(s,x,y)\, \phi(\d y) \d s <\infty\,.
\]
Thus, by Corollary~\ref{cor:eq_ineq},
\[
\int_X \rho_{V_\varepsilon}(t,x,y)\Phi(y)\, m(\d y)  \leq \Phi(x)\,.
\]
Finally, we let $\varepsilon\downarrow 0^+$ and use the monotone convergence theorem. 
\end{proof}

We collect a few observations.

\begin{remark}\label{rem:2}
Given a function $\Phi$ and a measure $\phi$, we let $\xi(x)=0$ if $\Phi(x)=0$ or $\Phi(x)=\infty$; otherwise,  set
$\xi(x)=1/\Phi(x)$. Define
\[
V(\d x)=\xi(x)\phi(\d x).
\]
By construction,  $\Phi(x)\xi(x)\leq 1$.
Thus, $\Phi(x) V(\d x)\leq \phi(\d x)$,
as required in Theorem~\ref{thm:super_median}.
\end{remark}

\begin{remark}
If we replace ''$=$'' by ''$\leq$'' in Dynkin's formula  \eqref{eq:init}, then the statement of  Theorem~\ref{thm:super_median} remains valid. The proof proceeds in a similar manner.
\end{remark}

\begin{remark}\label{rem:3}
For $\beta\in (0,1)$, we let $\Phi_\beta(x)=x^{1-\beta}$ and $\phi_\beta(\d x)=\beta(1-\beta)x^{-1-\beta}\d x$. Let $\delta \in (0,1/2]$.
By Proposition~\ref{prop:1}, both $g_\delta$ and $g_{1-\delta}$ satisfy Duhamel's formula for $g_D=g_0$ and 
\[
V(\d x)=\frac{\phi_\delta(\d x)}{\Phi_{\delta}(x)}=\frac{\phi_{1-\delta}(\d x)}{\Phi_{1-\delta}(x)}.
\]
At the same time, by Example~\ref{Ex:gausshalfline} and
Lemma~\ref{lem:A1},  $\Phi_\delta$ satisfies Dynkin's formula for $g_D$ and $\phi_\delta$, while $\Phi_{1-\delta}$ satisfies Dynkin's formula for $g_D$ and $\phi_{1-\delta}$.
In light of the discussion in the Introduction, we note that
\[
\int_D g_\delta(t,x,y) y^{1-\delta}\, \d{y}  = x^{1-\delta},
\qquad t>0,\; x\in D,
\]
which follows either by direct computation or, for $\delta\in(0,1/2)$, by Corollary~\ref{cor:eq_ineq}; the case $\delta=1/2$ is then obtained by taking the limit.
This means that $\Phi_\delta$ is invariant for $g_\delta$.
Recall that $g_\delta$ is the perturbation series for $g_D$ and $V$ (see Corollary~\ref{cor:1}).
Interestingly,
a direct computation also gives
\[
\int_D g_{1-\delta} (t,x,y) y^{\delta}\, \d{y}  = x^{\delta},
\qquad t>0,\; x\in D,
\]
that is, the function $\Phi_{1-\delta}$ is invariant for $g_{1-\delta}$. However, for $\delta \in (0,1/2)$, the latter is not a perturbation series for $g_D$ and $V$ (see Corollary~\ref{cor:2}); as mentioned above, it does satisfy Duhamel's formula for $g_D$ and $V$.
Now let $\delta\in (0,1/2)$. Since $g_{1-\delta}>g_\delta$ by Corollary~\ref{cor:2}, we have
\[
\int_D g_\delta (t,x,y) y^{\delta}\, \d{y}  < x^{\delta},
\qquad t>0,\; x\in D,
\]
and
\[
\int_D g_{1-\delta} (t,x,y) y^{1-\delta}\, \d{y}  > x^{1-\delta},
\qquad t>0,\; x\in D.
\]
Thus, the function
$\Phi_{1-\delta}$ is super-median for $g_\delta$, but is not invariant.  In particular, even after strengthening the assumption in Theorem~\ref{thm:super_median} to $\Phi(x) V(\d x)= \phi(\d x)$, we cannot, in general, 
expect equality in the conclusion, namely, that
$\Phi$ is invariant for the perturbation series $\rho$.
The last inequality shows that the condition \eqref{eq:eq_ineq} in Corollary~\ref{cor:eq_ineq} cannot be simply dropped.
Indeed, the hypotheses of Theorem~\ref{prop:tp-init}
are satisfied with $\Phi=\Phi_\delta$,
$p=g_D$, $\phi(\d{x})=\phi_\delta(\d{x})$,
and $\tilde{p}=g_{1-\delta}$, but \eqref{eq:3} still fails.
\end{remark}

Here is how we recover \cite[Theorem~1]{MR3460023} using Theorem~\ref{thm:super_median}.
\begin{example}\label{ex:2} 
Let $f : \RR\ \to [0,\infty)$ be non-decreasing, and assume that $f=0$ on $(-\infty,0]$.
Let $\mu$ be a $\sigma$-finite measure on $(X,\mathcal{M})$.
Define $h$ by \eqref{eq:hbyf}.
Furthermore, let 
$q(x)=0$ if $h(x)=0$ or $h(x)=\infty$, and otherwise set
\[
q(x)=\frac1{h(x)} \int_0^\infty \int_X f'(t) p(t,x,y)\, \mu(\d y) \d t.
\]
Denote by $\rho_q$ the perturbation series for $p$ and $q(x)m(\d x)$.
If $p$ is a transition density, then
\[
\int_X \rho_q (t,x,y)h(y)\, m(\d y)  \leq h(x).
\]
Indeed, by Lemmas~\ref{ex:1} and~\ref{lem:A1}, $h$ satisfies Dynkin’s formula for $p$ and the measure $\theta$ defined in Lemma~\ref{ex:1}. Since $f'(t)\d t$ is dominated by the Stieltjes measure $\lambda(\d t)$ associated with $f$, the definition of $q$ gives $h(x)q(x)m(\d x)\leq\theta(\d x)$. Theorem~\ref{thm:super_median} applied with $\Phi=h$, $\phi=\theta$, and $V(\d x)=q(x)m(\d x)$, yields the asserted inequality.
\end{example}

\section{Negative perturbations via Duhamel's formula}\label{sec:D-Neg}

In this section we develop a counterpart of the results of Section~\ref{sec:D-Pos} that is suited to the study of non-positive (negative) perturbations.

\begin{definition}
\rm
We say that 
an $\mathcal{M}$-measurable
function $\Psi: X \to [0,\infty]$ satisfies (sub-median)  \textit{Dynkin's formula} for $p$ and
$\psi$ provided
$\psi$ is a measure on $(X,\mathcal{M})$, and
\begin{align}\label{eq:init-n}
\int_X p(t,x,y)\Psi(y)\, m(\d y) 
=\Psi(x)+ \int_0^t \int_X p(s,x,y)\, \psi(\d y) \d s,
\qquad  t>0, \; x\in X.
\end{align}
\end{definition}

\begin{definition}
\rm
We say that
$\tp$  satisfies \textit{Duhamel's formula}
for $p$ and $-U$ provided
$U$ is a measure on $(X,\mathcal M)$, and
\begin{align}\label{eq:Duh-n}
\tp(t,x,y)+\int_0^t\int_X \tp(s,x,z)p(t-s,z,y)\, U(\d z)\d s
=p(t,x,y),
\qquad t>0,\, x,y\in X.
\end{align}
Whenever this holds, we say that
$\tp$ is a Schr\"odinger perturbation of $p$ by $-U$.
\end{definition}

We discuss the sub-median version of Dynkin's formula~\eqref{eq:init-n} and ways of finding $\Psi$ and $\psi$ that satisfy it. A general construction is given in Lemma~\ref{lem:BH}: subtracting a zero-resolvent from a dominating invariant function reverses the sign of the source term in Dynkin's formula and yields a non-negative function. Example~\ref{ex:4.1} illustrates this result. Examples~\ref{ex:4.3} and~\ref{ex:4} indicate a different approach, see also \cite{MR4140086}.

\begin{lemma}\label{lem:BH}
Assume that
$\Omega$ is invariant for $p$, and
$\Theta$ is the zero-resolvent for $p$ and $\theta$.
If $p$ is a transition density, $\Omega(x)<\infty$ for every $x\in X$, and $\Theta\leq \Omega$, then
\[
\Psi(x)=\Omega(x)-\Theta(x),\qquad x \in X,
\]
is non-negative and satisfies (sub-median) Dynkin's formula for $p$ and $\psi=\theta$.
\end{lemma}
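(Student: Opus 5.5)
Non-negativity of $\Psi$ is immediate from $\Theta\le\Omega$. Note that $\Theta(x)\le\Omega(x)<\infty$ for every $x\in X$, so $\Psi$ is a well-defined $[0,\infty)$-valued measurable function. The plan is to take Dynkin's formula for the zero-resolvent and subtract it from the invariance identity for $\Omega$. The only care needed is that the subtraction is legitimate, i.e.\ that all terms involved are finite.

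\emph{Step 1: Dynkin's formula for $\Theta$.} By Lemma~\ref{lem:A1}, since $p$ is a transition density, $\Theta$ satisfies Dynkin's formula for $p$ and $\theta$:
\[
\int_X p(t,x,y)\Theta(y)\,m(\d y)+\int_0^t\int_X p(s,x,y)\,\theta(\d y)\,\d s=\Theta(x),\qquad t>0,\;x\in X.
\]
Since $\Theta(x)<\infty$, both terms on the left-hand side are finite.

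\emph{Step 2: finiteness of $\int_X p(t,x,y)\Psi(y)\,m(\d y)$.} Since $0\le\Theta\le\Omega$ and $\Omega$ is invariant for $p$, we have
\[
\int_X p(t,x,y)\Theta(y)\,m(\d y)\le \int_X p(t,x,y)\Omega(y)\,m(\d y)=\Omega(x)<\infty.
\]
Hence $p(t,x,\cdot)\Omega$ and $p(t,x,\cdot)\Theta$ are $m$-integrable. Because $\Psi=\Omega-\Theta$ with both functions finite and non-negative, linearity of the integral gives
\[
\int_X p(t,x,y)\Psi(y)\,m(\d y)=\Omega(x)-\int_X p(t,x,y)\Theta(y)\,m(\d y).
\]

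\emph{Step 3: conclusion.} Substitute $\int_X p(t,x,y)\Theta(y)\,m(\d y)=\Theta(x)-\int_0^t\int_X p(s,x,y)\,\theta(\d y)\,\d s$ from Step 1 into the identity of Step 2. This yields
\[
\int_X p(t,x,y)\Psi(y)\,m(\d y)=\Omega(x)-\Theta(x)+\int_0^t\int_X p(s,x,y)\,\theta(\d y)\,\d s,
\]
which is exactly \eqref{eq:init-n} with $\psi=\theta$.

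The only delicate point is the finiteness bookkeeping in Step 2, so that no expression of the form $\infty-\infty$ arises. This is fully handled by the assumption $\Omega<\infty$ together with $\Theta\le\Omega$. No further obstacle is expected.
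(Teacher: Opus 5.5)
Your proposal is correct and follows the paper's proof essentially verbatim: apply Lemma~\ref{lem:A1} to get Dynkin's formula for $\Theta$, use invariance of $\Omega$ together with $0\le\Theta\le\Omega<\infty$ to ensure all terms are finite, and substitute. Your explicit finiteness bookkeeping in Step~2 just spells out what the paper summarizes as ``all terms below are finite.''
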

\begin{proof}
By Lemma \ref{lem:A1}, $\Theta$ satisfies (super-median) Dynkin's formula for $p$ and $\theta$. Since $0\leq\Theta\leq \Omega<\infty$ and $\Omega$ is invariant for $p$, all terms below are finite. Thus, for every $t>0$ and $x\in X$,
\begin{align*}
\int_X p(t,x,y)\Psi(y)\,m(\d y)
&=\Omega(x)-\int_X p(t,x,y)\Theta(y)\,m(\d y)\\
&=\Omega(x)-\Theta(x)+\int_0^t\int_X p(s,x,y)\,\theta(\d y)\d{s}\\
&=\Psi(x)+\int_0^t\int_X p(s,x,y)\,\theta(\d y)\d{s}.
\end{align*}
\end{proof}

\begin{example}\label{ex:4.1}
Let $d\in\mathbb{N}$ satisfy $d\geq 3$ and $g(t,x,y)=g(t,y-x)= (4\pi t)^{-d/2}e^{-\frac{|y-x|^2}{4t}}$, $t>0$,~$x,y\in\Rd$.
For $\beta>0$, put
\[
a_\beta=\frac{d-2}{d-2+\beta},\qquad
\theta(\d x)=a_\beta\beta(d-2+\beta)|x|^{\beta-2}\mathbf{1}_{B(0,1)}(x)\,\d x.
\]
The function
\[
\Psi(x) =\begin{cases}
a_\beta|x|^\beta,& |x|\leq1,\\
1-(1-a_\beta)|x|^{2-d},& |x|>1,
\end{cases}
\]
satisfies (sub-median) Dynkin's formula for $g$ and $\theta$.

Indeed, the function $\Omega\equiv1$ is invariant for $g$.  Let
\[
\Theta(x)=\int_0^\infty\int_{\mathbb R^d}g(t,x,y)\,\theta(\d y) \d{t}.
\]
Using the mean value property of the Newtonian kernel, we obtain
\[
\Theta(x)=\beta\int_0^1 r^{d+\beta-3}(|x|\vee r)^{2-d} \,\d r 
=\begin{cases}
1-a_\beta|x|^\beta,& |x|\leq1,\\
(1-a_\beta)|x|^{2-d},& |x|>1.
\end{cases}
\]
In particular, $0\leq\Theta\leq\Omega$. The claim follows from Lemma~\ref{lem:BH}.
\end{example}

Lemma~\ref{lem:BH} applies whenever the zero-resolvent of $\theta$ is dominated by a finite invariant function. This domination need not hold in the examples below: the power functions in Examples \ref{ex:4.3} and \ref{ex:4} grow faster than the natural invariant functions of the respective kernels.
Establishing the sub-median version of Dynkin’s formula for these functions therefore requires a different argument, based on compensated Green kernels in Example~\ref{ex:4.3} and compensated Gaussian kernels in Example~\ref{ex:4}. Once this formula is established, Theorem~\ref{prop:tp-init-n} yields a corresponding perturbed Dynkin-type identity, subject to an explicit finiteness condition.

\begin{theorem}\label{prop:tp-init-n}
Assume that $\Psi$ satisfies (sub-median) Dynkin's formula for $p$ and $\psi$, and that  
$\tp$ satisfies Duhamel's formula for $p$ and $-U$.
Let $t>0$, $x\in X$ be fixed. If
\begin{align}\label{ineq:finite-n}
\int_0^t \int_X \int_0^r\int_X \tp(s,x,z)  p(r-s,z,y) \, U(\d z) \d s \,\psi(\d y)\d r <\infty,
\end{align}
then
$$
\int_X \tp(t,x,y)\Psi(y)\, m(\d y) + \int_0^t \int_X \tp(s,x,y)  \Psi(y)\, U(\d y) \d s  = 
\Psi(x)+ \int_0^t \int_X \tp(s,x,y)\, \psi( \d y) \d s.
$$
\end{theorem}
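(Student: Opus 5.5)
We mirror the proof of Theorem~\ref{prop:tp-init}. The difference is that Duhamel's formula \eqref{eq:Duh-n} now expresses $p$ as $\tp$ plus a non-negative correction. Identities therefore have to be rearranged by subtraction, and \eqref{ineq:finite-n} is exactly what makes the subtraction legitimate. All manipulations are first carried out for non-negative integrands, using Tonelli's theorem. Only at the end do we cancel finite quantities.

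\textbf{Step 1: integrate against $\Psi$.} Multiply \eqref{eq:Duh-n} by $\Psi(y)$, integrate in $m(\d y)$, and use Tonelli. This gives
\[
\int_X \tp(t,x,y)\Psi(y)\,m(\d y)+\int_0^t\int_X \tp(u,x,z)\Big(\int_X p(t-u,z,y)\Psi(y)\,m(\d y)\Big)U(\d z)\,\d u=\int_X p(t,x,y)\Psi(y)\,m(\d y).
\]
By \eqref{eq:init-n} applied at $(t-u,z)$, the inner bracket equals $\Psi(z)+\int_0^{t-u}\int_X p(r,z,y)\,\psi(\d y)\,\d r$. Applying \eqref{eq:init-n} at $(t,x)$ on the right-hand side, we obtain
\[
A+B+C=\Psi(x)+D .
\]
The four terms are:
\begin{itemize}
\item $A=\int_X\tp(t,x,y)\Psi(y)\,m(\d y)$;
\item $B=\int_0^t\int_X\tp(u,x,z)\Psi(z)\,U(\d z)\,\d u$;
\item $C=\int_0^t\int_X\tp(u,x,z)\int_0^{t-u}\int_X p(r,z,y)\,\psi(\d y)\,\d r\,U(\d z)\,\d u$;
\item $D=\int_0^t\int_X p(s,x,y)\,\psi(\d y)\,\d s$.
\end{itemize}

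\textbf{Step 2: integrate against $\psi$.} Integrate \eqref{eq:Duh-n} in $\psi(\d y)\,\d r$ over $r\in(0,t)$. Then change the order of integration via the substitution $r-s\mapsto r$, exactly as in the proof of Theorem~\ref{prop:tp-init}. This gives
\[
E+C=D,\qquad E=\int_0^t\int_X \tp(s,x,y)\,\psi(\d y)\,\d s .
\]
Moreover, after Tonelli, $C$ coincides with the quantity in \eqref{ineq:finite-n}, so $C<\infty$.

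\textbf{Step 3: cancel $C$.} Since $C<\infty$, we may subtract $C$ in both identities, using the convention that $\infty-C=\infty$. Step 2 gives $E=D-C$. Step 1 gives $A+B=\Psi(x)+D-C=\Psi(x)+E$, which is the claim. If $D=\infty$, both sides are infinite, and this is consistent because $E=D-C=\infty$ and then $A+B=\infty$; the convention $a+\infty=\infty$ covers this case.

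\textbf{Main obstacle.} The delicate point is ensuring that the cancellation never produces $\infty-\infty$. This is why the argument subtracts only the single finite term $C$ supplied by \eqref{ineq:finite-n}, and never subtracts $D$ or terms involving $\Psi$, which could be infinite. A secondary check is that the Fubini change of variables in Step 2 produces exactly the expression in \eqref{ineq:finite-n}. That integral uses the kernel order $\tp(s,x,z)p(r-s,z,y)$, which matches the term in $C$ after the substitution $r\mapsto r-u$.
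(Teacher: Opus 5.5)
Your proof is correct and follows the paper's argument essentially step by step: you integrate \eqref{eq:Duh-n} against $\Psi(y)\,m(\d y)$, apply the sub-median Dynkin formula twice, identify the cross term with the finite quantity in \eqref{ineq:finite-n} and subtract it, and then identify the remainder by integrating \eqref{eq:Duh-n} against $\psi(\d y)\,\d r$. Your explicit check of the case $D=\infty$ in extended arithmetic is a small addition that the paper leaves implicit.
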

\begin{proof}
We integrate \eqref{eq:Duh-n} with respect to $\Psi(y)m(\d y)$,
\begin{align*}
\int_X \tp(t,x,y)\Psi(y)m(\d y) 
+\int_0^t\int_X \tp(s,x,z)\int_X p(t-s,z,y) \Psi(y)m(\d y)\, U(\d z)\d s& \\
=\int_X p(t,x,y) \Psi(y)m(\d y)&.
\end{align*}
Now, we use \eqref{eq:init-n} twice:
for the inner integral on the left-hand side and for the integral on the right-hand side above,
\begin{align*}
&\int_X \tp(t,x,y)\Psi(y)m(\d y)
+\int_0^t\int_X \tp(s,x,z)\Psi(z) \, U(\d z)\d{s}\\
&\qquad +\int_0^t\int_X \tp(s,x,z)\int_0^{t-s}\int_X p(u,z,y) \, \psi(\d y)\d u\, U(\d z)\d s
=\Psi(x)+\int_0^t \int_X p(r,x,y)\, \psi(\d y)\d r.
\end{align*}
Since 
\begin{align*}
&\int_0^t\int_X \tp(s,x,z)\int_0^{t-s}\int_X p(u,z,y) \, \psi(\d y)\d u\, U(\d z)\d s\\
&\qquad =
\int_0^t\int_X \int_s^t\int_X \tp(s,x,z) p(r-s,z,y) \, \psi(\d y)\d r\, U(\d z)\d s\\
&\qquad =
\int_0^t\int_X \int_0^r\int_X \tp(s,x,z) p(r-s,z,y) \, U(\d z)\d s \, \psi(\d y)\d r<\infty
\end{align*}
we subtract it from both sides to get
\begin{align*}
&\int_X \tp(t,x,y)\Psi(y)m(\d y)
+\int_0^t\int_X \tp(s,x,z)\Psi(z) \, U(\d z)\d s\\
&=\Psi(x)+\int_0^t \int_X p(r,x,y)\, \psi(\d y)\d r
-\int_0^t\int_X \int_0^r\int_X \tp(s,x,z) p(r-s,z,y) \, U(\d z)\d s \, \psi(\d y)\d r\\
&=\Psi(x)+\int_0^t \int_X \tp(r,x,y)\, \psi(\d y)\d r.
\end{align*}
In the last equality we used \eqref{eq:Duh-n} for $t=r$  integrated with respect to $\psi(\d y)\d r$, together with the finiteness assumption.
\end{proof}

\begin{corollary}\label{rem:tp-init-n}
Assume the hypotheses of Theorem~\ref{prop:tp-init-n}.
Let $t>0$, $x\in X$ be fixed. If
\begin{align}\label{ineq:p-finite-n}
\int_0^t\int_X p(s,x,y)\, \psi(\d y)\d s<\infty
\end{align}
then \eqref{ineq:finite-n} is satisfied, and we have
\[
\int_X \tp(t,x,y)\Psi(y)\, m(\d y) + \int_0^t \int_X \tp(s,x,y) \big[ \Psi(y)\, U(\d y) - \psi( \d y) \big] \d s  = 
\Psi(x).
\]
In particular,
\[
\psi(\d y)= \Psi(y)U(\d y)
\quad\Longrightarrow\quad
\int_X \tp(t,x,y)\Psi(y)\,m(\d y)= \Psi(x).
\]
\end{corollary}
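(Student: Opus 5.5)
The plan is to show that \eqref{ineq:p-finite-n} dominates the quadruple integral in \eqref{ineq:finite-n}, and then to rearrange the identity of Theorem~\ref{prop:tp-init-n}.

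\emph{Step 1: domination by the unperturbed kernel.} Duhamel's formula \eqref{eq:Duh-n} has only non-negative terms. Hence, for every $r>0$ and $y\in X$,
\[
\int_0^r\int_X \tp(s,x,z)p(r-s,z,y)\,U(\d z)\d s = p(r,x,y)-\tp(r,x,y)\le p(r,x,y).
\]
If $p(r,x,y)=\infty$, the inequality between the left side and $p(r,x,y)$ still holds trivially, since the left side is at most the right side of \eqref{eq:Duh-n}. Integrating this pointwise bound against $\psi(\d y)\d r$ over $(0,t)\times X$ shows that the integral in \eqref{ineq:finite-n} is at most $\int_0^t\int_X p(r,x,y)\,\psi(\d y)\d r$. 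This is finite by \eqref{ineq:p-finite-n}, so Theorem~\ref{prop:tp-init-n} applies.

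\emph{Step 2: rearranging the identity.} By the same bound $\tp\le p$, we have
\[
\int_0^t\int_X\tp(s,x,y)\,\psi(\d y)\d s\le\int_0^t\int_X p(s,x,y)\,\psi(\d y)\d s<\infty.
\]
The identity from Theorem~\ref{prop:tp-init-n} reads
\[
A+B=\Psi(x)+C,
\]
where $A=\int_X\tp(t,x,y)\Psi(y)\,m(\d y)$, $B=\int_0^t\int_X\tp(s,x,y)\Psi(y)\,U(\d y)\d s$, and $C=\int_0^t\int_X\tp(s,x,y)\,\psi(\d y)\d s<\infty$.

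Since $C$ is finite, we may subtract it from both sides. The difference $B-C$ is well defined under the abbreviation convention of the paper, because one of the two integrals is finite. Writing it as $\int_0^t\int_X\tp(s,x,y)\big[\Psi(y)U(\d y)-\psi(\d y)\big]\d s$ gives the displayed formula. The identity then holds in the extended sense: if $\Psi(x)=\infty$, both sides are infinite, and nothing is lost by the convention.

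\emph{Step 3: the invariance case.} When $\psi(\d y)=\Psi(y)U(\d y)$, we have $B=C$, and both are finite by Step 2. The bracketed term therefore vanishes, which leaves $\int_X\tp(t,x,y)\Psi(y)\,m(\d y)=\Psi(x)$.

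The only point needing care is the handling of $\infty-\infty$ in Step 2. It is settled precisely by the finiteness of $C$, which comes from $\tp\le p$.
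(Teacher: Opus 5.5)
Your proposal is correct and follows the paper's own argument. You use Duhamel's formula \eqref{eq:Duh-n} to bound the integral in \eqref{ineq:finite-n} by \eqref{ineq:p-finite-n}, note that $\tp\le p$ makes $\int_0^t\int_X\tp(s,x,y)\,\psi(\d y)\d s$ finite, and then subtract this integral from both sides of the identity in Theorem~\ref{prop:tp-init-n}; your extended-arithmetic bookkeeping makes explicit what the paper leaves implicit.
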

\begin{proof}
The left-hand side of \eqref{ineq:p-finite-n} dominates that of \eqref{ineq:finite-n}, as follows from Duhamel's formula \eqref{eq:Duh-n}.
The latter also gives $\tp\leq p$,
and due to \eqref{ineq:p-finite-n} 
we can subtract the finite integral from both sides in the statement of Theorem~\ref{prop:tp-init-n}.
\end{proof}

\begin{example}\label{ex:4.3}
Let $g_D(t,x,y) = \frac{1}{\sqrt{4\pi t}} \left(e^{-\frac{(x-y)^2}{4t}}- e^{-\frac{(x+y)^2}{4t}}\right)$, $t>0$, $x,y\in D$, be the Dirichlet heat kernel of $\Delta$ on $D=(0,\infty)$. Then, for all $\beta>0$, $t>0$  and $x\in D$,
\[
\int_D g_D(t,x,y) y^{1+\beta} \d{y}=x^{1+\beta}+ \int_0^t \int_D g_D(s,x,y) \beta(1+\beta) y^{\beta-1} \, \d{y} \d{s}.
\]
Equivalently, in the Doob conditioned setting with $m(\d x)=x^2 \d x$,
\[
\int_D \frac{g_D(t,x,y)}{x \, y} y^\beta \, m(\d{y})=
 x^{\beta} + \int_0^t \int_D \frac{g_D(s,x,y)}{x \, y} \beta(1+\beta) y^{\beta-2} \, m(\d{y}) \d{s}.
\]
Furthermore, using the notation of Proposition~\ref{prop:1}, for all $\delta>0$, $t>0$ and $x\in D$,
\[
\int_D g_{-\delta} (t,x,y) y^{1+\delta}\,\d{y} = x^{1+\delta}.
\]

We give a justification. Consider
$g_D^0(t,y)=\lim_{x\to 0^+} g_D(t,x,y)/x$
and
\[
F(x,y) = \int_0^\infty \big(x g_D^0(s,y)-  g_D(s,x,y)\big)\, \d{s} =
x -  (x\land y) \geq 0.
\]
Then, for every $\beta>0$,
\[
\int_D F(x,y)y^{\beta-1} \, \d{y} = \frac{x^{1+\beta}}{\beta(\beta+1)}
\]
and, since $\int_D g_D(t,x,z) z \, \d{z} = x$, we also have
\begin{align*}
\int_D g_D(t,x,z) F(z,y) \,\d{z}
&= \int_D g_D(t,x,z)\left(z - \int_0^\infty g_D(s,z,y)\,\d{s} \right) \d{z}\\
&= x - \int_0^\infty g_D(t+s,x,y) \,\d{s}\\
&= F(x,y) + \int_0^t g_D(s,x,y)\, \d{s}.
\end{align*}
It remains to multiply both sides by $y^{\beta-1}$
and integrate over $D$. 
We have therefore justified that $x^{1+\delta}$ satisfies (sub-median) Dynkin's formula for $g_D$ and $\psi(\d x)=\delta(1+\delta)x^{\delta-1}\d x$. Put $U(\d x)=\delta(1+\delta)x^{-2}\d x$, so that $\psi(\d x)=\Psi(x)U(\d x)$. By Proposition~\ref{prop:1}, $g_{-\delta}$ satisfies Duhamel's formula for $g_D$ and $-U$.
The condition \eqref{ineq:p-finite-n} follows from Dynkin's formula and   $\int_D g_D(t,x,y)y^{1+\delta} \d y<\infty$.
Corollary~\ref{rem:tp-init-n} then gives the invariance.
\end{example}

We will justify \eqref{eq:init-n} for power functions and the Gauss–Weierstrass kernel.
First, we extend the result in \eqref{eq:gauss-1} to the range $\gamma > d/2 - 1$.
For $n=1$, the statement of the following lemma can be found in \cite[Proof of Lemma~2.2]{MR4140086}.

\begin{lemma}\label{lem:gauss-1}
Let $d\geq 3$. For all $n\in\mathbb{N}$, $\gamma\in (d/2-2+n,d/2-1+n)$ and $x\in\Rd$,
\begin{align*}
\int_0^\infty (-1)^n\left[g(s,x)- g(s,0) \sum_{k=0}^{n-1}\frac1{k!}\left(-\frac{|x|^2}{4s}\right)^k   \right] s^\gamma \, \d s
= \frac1{4^{\gamma+1}\pi^{d/2}} \frac{\Gamma(d/2-1+n-\gamma)}{\prod_{j=1}^n (\gamma-d/2+2-j)} |x|^{2\gamma-d+2}.
\end{align*}
\end{lemma}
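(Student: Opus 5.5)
The plan is to reduce the identity, by the substitution $u=|x|^2/(4s)$, to the classical Cauchy--Saalsch\"utz representation of $\Gamma$ at negative non-integer arguments. For $x=0$ both sides vanish, since the bracket is $g(s,0)-g(s,0)=0$ when $n\ge1$, and the right-hand side has the factor $|0|^{2\gamma-d+2}=0$ because $2\gamma-d+2>2n-2\ge0$. So fix $x\neq0$.

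\textbf{Substitution.} Write $g(s,x)=(4\pi s)^{-d/2}e^{-|x|^2/(4s)}$ and $g(s,0)=(4\pi s)^{-d/2}$. Set $u=|x|^2/(4s)$, so that $s=|x|^2/(4u)$ and $\d s=|x|^2/(4u^2)\,\d u$. Then the left-hand side becomes
\[
(4\pi)^{-d/2}\Big(\frac{|x|^2}{4}\Big)^{\gamma+1-d/2}\int_0^\infty u^{a-1}\,(-1)^n\Big[e^{-u}-\sum_{k=0}^{n-1}\frac{(-u)^k}{k!}\Big]\d u ,
\qquad a=\frac d2-1-\gamma .
\]
The hypothesis $\gamma\in(d/2-2+n,d/2-1+n)$ is exactly $a\in(-n,-n+1)$. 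Taylor's formula with integral remainder shows that $(-1)^n\big[e^{-u}-\sum_{k<n}(-u)^k/k!\big]=\int_0^u\frac{(u-v)^{n-1}}{(n-1)!}e^{-v}\,\d v\ge0$. Hence the integrand is non-negative and Tonelli-type manipulations are legitimate.

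\textbf{Convergence.} The bracket is $O(u^n)$ as $u\to0$, so the integrand is $O(u^{a-1+n})$ there, and $a+n>0$ gives integrability at $0$. As $u\to\infty$ the bracket is $O(u^{n-1})$, so the integrand is $O(u^{a+n-2})$, and $a+n-1<0$ gives integrability at $\infty$.

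\textbf{Key identity.} I will prove that for $a\in(-n,-n+1)$,
\[
\int_0^\infty u^{a-1}\Big[e^{-u}-\sum_{k=0}^{n-1}\frac{(-u)^k}{k!}\Big]\d u=\Gamma(a).
\]
The argument is by induction on $n$, integrating by parts. Write $R_n(u)=e^{-u}-\sum_{k<n}(-u)^k/k!$, so that $R_n'(u)=-R_{n-1}(u)$. Then
\[
\int_0^\infty u^{a-1}R_n\,\d u=\Big[\frac{u^a}{a}R_n\Big]_0^\infty+\frac1a\int_0^\infty u^{a}R_{n-1}\,\d u .
\]
The boundary terms vanish by the same asymptotics as above, namely $u^aR_n=O(u^{a+n})$ at $0$ and $O(u^{a+n-1})$ at $\infty$. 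The remaining integral is the case $n-1$ with exponent $a+1\in(-n+1,-n+2)$. By induction it equals $\Gamma(a+1)/a=\Gamma(a)$. The base case $n=0$, i.e.\ $a>0$, is Euler's integral. For the step from $n=0$ to $n=1$ one needs the induction to start at $a+1>0$, which holds, so the base case is fine.

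\textbf{Matching constants.} Finally I rewrite the result. First, $(-1)^n\Gamma(a)=(-1)^n\Gamma(a+n)\big/\prod_{j=0}^{n-1}(a+j)$, with $a+n=d/2-1+n-\gamma$. Next,
\[
\prod_{j=0}^{n-1}(a+j)=(-1)^n\prod_{j=0}^{n-1}\Big(\gamma-\frac d2+1-j\Big)=(-1)^n\prod_{j=1}^n\Big(\gamma-\frac d2+2-j\Big),
\]
so the two factors $(-1)^n$ cancel. For the prefactor, $(4\pi)^{-d/2}4^{-(\gamma+1-d/2)}|x|^{2\gamma+2-d}=4^{-\gamma-1}\pi^{-d/2}|x|^{2\gamma-d+2}$, which gives the stated formula.

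I expect the only delicate point to be the integration by parts: checking carefully that the boundary terms vanish at both ends exactly in the stated range of $a$. This is routine once the Taylor-remainder bounds $|R_n(u)|\le u^n/n!$ and $|R_n(u)|\le C(1+u^{n-1})$ are in hand.
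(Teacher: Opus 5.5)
Your proof is correct. It shares the paper's first step: the substitution $u=|x|^2/(4s)$ reduces the left-hand side to $(4\pi)^{-d/2}(|x|^2/4)^{\gamma+1-d/2}\int_0^\infty u^{a-1}(-1)^nR_n(u)\,\d u$ with $a=d/2-1-\gamma\in(-n,-n+1)$. The two proofs then evaluate this integral differently.

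The paper writes $(-1)^nR_n(u_n)$ as the $n$-fold iterated integral $\int_0^{u_n}\int_0^{u_{n-1}}\cdots\int_0^{u_1}e^{-u_0}\,\d u_0\cdots\d u_{n-1}$. This is the unfolded form of your Taylor-remainder formula. The paper then applies Tonelli's theorem to integrate the power $u_n^{a-1}$ first. Each integration over $(u_{j-1},\infty)$ produces one factor $1/(\gamma-d/2+2-j)$, and what remains is Euler's integral $\Gamma(d/2-1+n-\gamma)$.

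You instead integrate by parts $n$ times, using $R_n'=-R_{n-1}$. This identifies the integral as the Cauchy--Saalsch\"utz representation of $\Gamma(a)$, and you read off the constant from the functional equation $\Gamma(a)=\Gamma(a+n)/\prod_{j=0}^{n-1}(a+j)$.

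The two computations are essentially dual, since each integration by parts corresponds to one interchange of the order of integration. The paper's version is a little more economical. Because every integrand is non-negative, Tonelli delivers convergence and the value at once, with no boundary terms and no separate asymptotics. Your route must verify integrability and the vanishing of $u^aR_n$ at $0$ and $\infty$ at every stage of the induction, which you do correctly using $|R_n(u)|\le u^n/n!$ and $|R_n(u)|\le C(1+u^{n-1})$. In exchange, your argument explains the shape of the answer: the integral is just $(-1)^n\Gamma(a)$ at a negative non-integer point, and the product in the denominator is the functional equation of $\Gamma$.
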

\begin{proof}
At $x=0$ both sides vanish, so we assume that $x\ne0$. The convergence of the integral follows from the following calculation. We substitute $t=|x|^2/(4s)$ in the integral on the left-hand side to get
\begin{align*}
(4\pi)^{-d/2} &\left(\frac{|x|^2}{4}\right)^{\gamma-d/2+1}
\int_0^\infty (-1)^n\left[e^{-t}-\sum_{k=0}^{n-1}\frac{(-t)^k}{k!}\right]  t^{d/2-2-\gamma} \,\d t\\
&=
\frac{|x|^{2\gamma-d+2}}{4^{\gamma+1}\pi^{d/2}}
\int_0^\infty\left[ \int_0^{u_n}\int_0^{u_{n-1}}\ldots\int_0^{u_1} e^{-u_0}\, \d u_0\ldots \d u_{n-1} \right] u_n^{d/2-2-\gamma} \,\d u_n \\
&=
\frac{|x|^{2\gamma-d+2}}{4^{\gamma+1}\pi^{d/2}}
\int_0^\infty\left[ \int_{u_0}^{\infty}\int_{u_1}^{\infty}\ldots\int_{u_{n-1}}^{\infty} e^{-u_0} u_n^{d/2-2-\gamma}\, \d u_n\ldots \d u_{0} \right]\\
&=
\frac{|x|^{2\gamma-d+2}}{4^{\gamma+1}\pi^{d/2}}
\left(\prod_{j=1}^n\frac1{\gamma-d/2+2-j}\right)
\int_0^\infty  e^{-u_0}  u_0^{d/2-2+n-\gamma} \,\d u_0.
\end{align*}
\end{proof}

\begin{example}\label{ex:4}
Let $d\geq 3$ and $g(t,x,y)=g(t,y-x)= (4\pi t)^{-d/2}e^{-\frac{|y-x|^2}{4t}}$ where $t>0$, $x,y\in\Rd$. 
Then, for every $\beta > 0$,
\begin{align}\label{eq:2}
\int_{\Rd} g(t,x,y) |y|^\beta \,\d y 
=|x|^\beta + \beta(d-2+\beta) \int_0^t \int_{\Rd} g(s,x,y)\, |y|^{\beta-2} \, \d y \d s,
\qquad t>0,\, x\in\Rd.
\end{align}

Here is a justification: 
for $n\in\mathbb{N}$ and $2(n-1)<\beta<2n$,
we let
\[
f(t)=c t^{\frac{d-2+\beta}{2}}, \qquad c= \frac{2^{d+\beta}\pi^{d/2}}{\Gamma(n-\beta/2)}  {\prod_{j=1}^n (\beta/2+1-j)}.
\]
By Lemma~\ref{lem:gauss-1} with $\gamma=d/2-1+\beta/2$,
\[
\int_0^\infty (-1)^n\left[g(s,x)- g(s,0) \sum_{k=0}^{n-1}\frac1{k!}\left(-\frac{|x|^2}{4s}\right)^k   \right] f(s) \, \d s= |x|^{\beta}.
\]
Since $2(n-2)<\beta-2<2(n-1)$,
by Lemma~\ref{lem:gauss-1} with $\gamma=d/2-1+(\beta-2)/2$
and by \eqref{eq:gauss-1},
\[
\int_0^\infty (-1)^{n-1}\left[g(s,x)- g(s,0) \sum_{k=0}^{n-2}\frac1{k!}\left(-\frac{|x|^2}{4s}\right)^k   \right] f'(s) \, \d s  = \beta (d-2+\beta)  |x|^{\beta-2}. 
\]
We use the usual convention that the sum is zero when the upper summation index is smaller than the lower one. We have also used the relation
\[
c \left(\frac{d-2+\beta}{2} \right) \frac{\Gamma(n-\beta/2)}{2^{d+\beta-2}\pi^{d/2}}  \frac1{\prod_{j=1}^{n-1} (\beta/2-j)} =\beta(d-2+\beta).
\]
We proceed by induction.
Assume that the equality holds for $\beta \leq 2(n-1)$.
We first consider $\beta \in (2(n-1),2n)$.
We set $\mathcal{E}=\int_{\Rd} g(t,x,y) |y|^\beta \,\d y 
-|x|^\beta$, and for $x\neq 0$ we have
\begin{align*}
	\mathcal{E}&= 
\int_{\Rd} g(t,x,y) \int_0^\infty (-1)^n\left[g(s,y)- g(s,0) \sum_{k=0}^{n-1}\frac1{k!}\left(-\frac{|y|^2}{4s}\right)^k   \right] f(s) \, \d s \, \d y\\
&\quad -\int_0^\infty (-1)^n\left[g(s,x)- g(s,0) \sum_{k=0}^{n-1}\frac1{k!}\left(-\frac{|x|^2}{4s}\right)^k   \right] f(s) \, \d s\\
	&=\int_0^\infty (-1)^n\bigg[g(s+t,x)-g(s,x)\\
&\quad - g(s,0) \sum_{k=1}^{n-1}\frac1{k!}\left(-\frac1{4s}\right)^k 2k(d-2+2k)\int_0^t \int_{\Rd} g(u,x,z)|z|^{2(k-1)}\,\d z \d u   \bigg] f(s) \, \d s.
\end{align*}
The latter integral is absolutely convergent, as it is the difference of two absolutely convergent integrals.
We write
$f(s)=\int_0^s f'(r)\,\d r$, and we change the order of integration, to get
\begin{align*}
\mathcal{E}&=
 \int_0^\infty \int_r^\infty (-1)^n\bigg[ g(s+t,x)-g(s,x) \\
 &\quad - g(s,0) \sum_{k=1}^{n-1}\frac1{(k-1)!}\left(-\frac1{4s}\right)^k 2(d-2+2k)\int_0^t \int_{\Rd} g(u,x,z)|z|^{2(k-1)}\,\d z \d u    \bigg]  \d s \, f'(r) \, \d r.
\end{align*}
Since
\[
\int_r^\infty (g(s+t,x)-g(s,x)) \, \d s= -\int_0^t g(r+u,x) \, \d u = - \int_0^t \int_{\Rd} g(u,x,z) g(r,z) \, \d z \d u
\]
and
\[
2(d-2+2k)\int_r^\infty g(s,0) s^{-k} \, \d s =  4 \,g(r,0) r^{-(k-1)}
\]
we have
\begin{align*}
	\mathcal{E}&=
\int_0^t \int_{\Rd} g(u,x,z)
 \int_0^\infty (-1)^{n-1}\bigg[  g(r,z) 
- g(r,0) \sum_{k=1}^{n-1}\frac{(-1)^{k-1}}{(k-1)!}\left(\frac{|z|^2}{4r}\right)^{k-1}  \bigg]  f'(r) \, \d r
\,\d z \d u\\
&=\beta(d-2+\beta) \int_0^t \int_{\Rd} g(u,x,z) |z|^{\beta-2} \, \d z \d u.
\end{align*}
Now, we can also use the dominated convergence theorem to take $\beta \to 2n$ in \eqref{eq:2}.
The base case $n = 1$ follows by the same
argument. This completes the inductive argument.
\end{example}

\section*{Acknowledgements}
The research is partially supported by Grant No. 2023/49/B/ST1/00678 of the National Science Center (Poland).

\small
\bibliographystyle{abbrv}

\end{document}